\theoremstyle{plain}
\def\LaTeX{\leavevmode L\raise.42ex
\hbox{\kern-.3em\size{\sf@size}{0pt}\selectfont A}\kern-.15em\TeX}
\def\@currentlabel{2.1}\label{e:dispaa}
\def\@currentlabel{2.21}\label{e:dispau}
\def\@currentlabel{2.22}\label{e:dispav}
\def\@currentlabel{2.23}\label{e:dispaw}
\def\@currentlabel{2.24}\label{e:dispax}
\DeclareMathOperator{\im}{Im}
\DeclareMathOperator{\re}{Re}
\begin{document}

\title[Nonlocal Liouville Theorem]{A nonlinear Liouville theorem for fractional equations in the Heisenberg group}
\date{}
%%%%%%%%%%%%%%%%%%%%%%%%%%%%%%%%%%%%%%%%%%%%%%%%%%%%%%%%%%%%%%%%%%%%%%
%%%%%%%%%%%%%%%%%%%%%%%%%%%%%%%%%%%%%%%%%%%%%%%%%%%%%%%%%%%%%%%%%%%%%%

\author{Eleonora Cinti}

\address{E.C., Dipartimento di Matematica, Universit\`a degli Studi di Bologna,
Piazza di Porta San Donato 5,
40126 Bologna (Italy) }

\email{eleonora.cinti5@unibo.it }

\author{Jinggang Tan}

\address{J.T., Departamento de  Matem\'atica,
Universidad T\'{e}cnica Federico Santa Mar\'{i}a,  Avda. Espa\~na 1680,
Valpara\'{\i}so (Chile)}

\email{jinggang.tan@usm.cl}

%%%%%%%%%%%%%%%%%%%%%%%%%%%%%%%%%%%%%%%%%%%%%%%%%%%%%%%%%%%%%%%%%%%%%%
%%%%%%%%%%%%%%%%%%%%%%%%%%%%%%%%%%%%%%%%%%%%%%%%%%%%%%%%%%%%%%%%%%%%%%
\keywords{Fractional sublaplacian, Heisenberg group, Louville theorem, moving plane method }
\subjclass[2010]{ Primary: 35A01,35B50,35J70. Secondary: 35B53,35J50. }
%%%%%%%%%%%%%%%%%%%%%%%%%%%%%%%%%%%%%%%%%%%%%%%%%%%%%%%%%%%%%%%%%%%%%%
%%%%%%%%%%%%%%%%%%%%%%%%%%%%%%%%%%%%%%%%%%%%%%%%%%%%%%%%%%%%%%%%%%%%%%
\date{}\maketitle
%%%%%%%%%%%%%%%%%%%%%%%%%%%%%%%%%%%%%%%%%%%%%%%%%%%%%%%%%%%%%%%%%%%%%%
%%%%%%%%%%%%%%%%%%%%%%%%%%%%%%%%%%%%%%%%%%%%%%%%%%%%%%%%%%%%%%%%%%%%%%
%\usepackage{showkeys}

\newcommand{\N}{\mathbb{N}}
\newcommand{\R}{\mathbb{R}}
\newcommand{\Z}{\mathbb{Z}}
\newcommand{\CH}{{\mathbb{H}}^{n}}
\newcommand{\bH}{\mathbb{H}}
\newcommand{\C}{\mathbb{C}}
\newcommand{\dC}{\mathcal{C}}
\newcommand{\dD}{\mathcal{V}}

\theoremstyle{plain}
\newtheorem{Thm}{Theorem}[section]
\newtheorem{Lem}[Thm]{Lemma}
\newtheorem{Def}[Thm]{Definition}
\newtheorem{Cor}[Thm]{Corollary}
\newtheorem{Prop}[Thm]{Proposition}
\newtheorem{Rem}[Thm]{Remark}
\newtheorem{Ex}[Thm]{Example}

\numberwithin{equation}{section}

\begin{abstract}
We establish a Liouville-type theorem for a subcritical nonlinear problem, involving a fractional power of the sub-Laplacian in the Heisenberg group. To prove our result we will use the local realization of fractional CR covariant operators, which can be constructed as the Dirichlet-to-Neumann operator of a degenerate elliptic equation in the spirit of Caffarelli and Silvestre \cite{CS}, as established in \cite{FGMT}. The main tools in our proof are the CR inversion and the moving plane method, applied to the solution of the lifted problem in the half-space $\CH\times \R^+$.
\end{abstract}
\section{Introduction and Main Results}
\setcounter{equation}{0}

In this paper we establish a Liouville-type result for the following fractional nonlinear problem in the Heisenberg group:
\begin{equation}\label{P1}
\mathcal P_{\frac{1}{2}} u=u^p\quad \mbox{in}\;\;\CH.
\end{equation}
Here $\mathcal P_{\frac{1}{2}}$ denotes a CR covariant operator of order $1/2$ in $\CH$, whose principal symbols agree with the pure fractional power $1/2$ of the Heisenberg Laplacian $-\Delta_{\mathbb H}$. In \cite{FGMT} Frank, Gonzalez, Monticelli and one of the authors, study  CR covariant operators of fractional orders on orientable and strictly pseudoconvex CR manifolds. In particular, they focuse on the construction of such operators as the Dirichlet-to-Neumann map associated to a degenarate elliptic equation in the spirit of Caffarelli and Silvestre \cite{CS}.

In this context, the Heisenberg group $\CH$ plays the same role as $\R^n$ in conformal geometry, in the sense that, as shown by Folland and Stein in \cite{FS}, $\CH$ approximates the pseudohermitian structure of a general orientable and strictly pseudoconvex CR manifolds (see also \cite{JL}).

Given a K\"ahler-Einstein manifold $\mathcal X$, CR covariant operators of fractional order $\gamma$ are pseudodifferential operators whose principal symbol agrees with the pure fractional powers of the CR sub-Laplacian on the boundary $\mathcal M=\partial \mathcal X$. They can be defined using scattering theory, as done in \cite{Epstein91, HislopPeterTang, Guillarmou08, Gover-Graham:CR-powers}. In the particular case of the Heisenberg group, they are the intertwining operators on the CR sphere, which can be calculated using representation theory techniques (see \cite{BFM}).

%One of the main results in \cite{FGMT} establishes that it is possible to characterize fractional CR covariant operators on some CR manifold $\mathcal M=\partial \mathcal X$, as the Dirichlet-to-Neumann map of  a degenerate elliptic equation in the interior of $\mathcal X$ as in \cite{CS}. More precisely the main assumption on the manifold is the following: $\mathcal M$ is an horientable CR manifold of hypersurface type which is strictly pseudoconvex.

In \cite{FGMT}, in order to construct fractional CR covariant
 operators in the specific case of the Heisenberg group, $\CH$ is
 identified with the boundary of the Siegel domain in $\R^{2n+2}$
 (see Section 2 for the precise definition) and it is crucial to
 use its underlying complex hyperbolic structure.

Another possible approach in the construction of fractional powers of the sub-Laplacian consists in using purely functional analytic tools as done
by Ferrari and Franchi \cite{FF} who proved an extension result for fractional operators defined by using the spectral resolution of the sub-Laplacian in general Carnot groups, see also \cite{Fo75, Stein93}. The operators considered in \cite{FF} are different in nature from the ones in \cite{FGMT}, they correspond to the pure fractional powers of the sub-Laplacian and do not enjoy the CR covariance property.

\medskip
Since it will be of utmost importance in the sequel, we recall here the extension result proven in \cite{FGMT}.
\begin{Thm}[see Theorem 1.1 in \cite{FGMT}]\label{thm-extension}
Let $\gamma \in (0,1)$, $a=1-2\gamma$. For each $u \in C^\infty(\CH)$, there exists a unique solution $\mathcal E_\gamma u=U$ for the extension problem
\begin{equation}\label{extension}
\begin{cases}
\displaystyle \frac{\partial ^2 U}{\partial \lambda^2} +\frac{a}{\lambda}\frac{\partial U}{\partial\lambda} +\lambda^2 \frac{\partial^2U}{\partial t^2} +\frac{1}{2}\Delta_\bH U = 0 &\mbox{in}\:\: \widehat{\bH}^{n}_{+}:=\CH\times \R^+,\\
U=u &\mbox{on}\:\: \partial\widehat{\bH}^{n}_{+}=\CH\times \{\lambda=0\}.\\
\end{cases}
\end{equation}
Moreover,
$$\mathcal P_{\gamma} u=-c_\gamma \lim_{\lambda \rightarrow 0}\lambda^a\frac{\partial U}{\partial \lambda},$$
where $c_\gamma$ is a constant depending only on $\gamma$ which  precise value is given by
$$c_\gamma=\frac{\Gamma(\gamma)}{\gamma \Gamma(-\gamma)}\cdot 2^{2\gamma-1}.$$
\end{Thm}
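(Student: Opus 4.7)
The plan is to mimic the Caffarelli–Silvestre extension argument, but with the group Fourier transform on $\CH$ (the Schrödinger/Bargmann representation) playing the role that the Euclidean Fourier transform plays on $\R^n$. The degenerate elliptic equation in \eqref{extension} is not the naive one $\partial_\lambda^2 U + \tfrac{a}{\lambda}\partial_\lambda U + \tfrac12\Delta_\bH U = 0$: the extra term $\lambda^2 \partial_t^2 U$ reflects the complex hyperbolic / Siegel-domain geometry used in \cite{FGMT}, and it is precisely this term that makes the Fourier picture separate into a Bessel ODE.

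First I would diagonalize the horizontal operators. Writing $z=(x,y) \in \R^{2n}$ and $t \in \R$ for the Heisenberg coordinates and taking the partial Fourier transform in $t$ with dual variable $\tau$, I would then use the Bargmann–Fock (or Hermite) decomposition in $z$ that simultaneously diagonalizes $-\tfrac12\Delta_\bH$ at frequency $\tau$. On the joint eigenspace with Heisenberg eigenvalue $\mu = (2k+n)|\tau|$ (the Landau levels), the extension equation reduces to the ODE
\begin{equation*}
V''(\lambda) + \frac{a}{\lambda} V'(\lambda) - \bigl(\tau^2 \lambda^2 + \mu\bigr) V(\lambda) = 0.
\end{equation*}
Setting $V(\lambda)=\lambda^{\gamma}W(\lambda^2|\tau|/2)$ (or a similar substitution) turns this into a confluent hypergeometric / Whittaker equation whose solution space is two dimensional, spanned by one solution behaving like $1$ and one behaving like $\lambda^{2\gamma}$ as $\lambda\to 0^+$, with the second one decaying at infinity. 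Selecting the unique solution that is bounded as $\lambda\to\infty$ and equals $\hat u$ at $\lambda=0$ gives existence and uniqueness fiberwise; assembling these fibers via the inverse group Fourier transform produces $U = \mathcal E_\gamma u$, with $U \in C^\infty(\widehat{\bH}^n_+)$ for $u \in C^\infty(\CH)$.

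Next I would read off the Dirichlet-to-Neumann map. From the Frobenius expansion of the confluent hypergeometric solution near $\lambda=0$,
\begin{equation*}
U(z,t,\lambda) = A(z,t) + B(z,t)\,\lambda^{2\gamma} + o(\lambda^{2\gamma}),
\end{equation*}
so that $A=u$ and
\begin{equation*}
-\lim_{\lambda \to 0^+} \lambda^a \frac{\partial U}{\partial \lambda} = -2\gamma\, B(z,t).
\end{equation*}
The Fourier-side formula for $B$ is then the multiplier defining $\mathcal P_\gamma$, and comparing it against the normalization of the fractional CR operator from \cite{FGMT} pins down the constant
\begin{equation*}
c_\gamma = \frac{\Gamma(\gamma)}{\gamma\,\Gamma(-\gamma)} \cdot 2^{2\gamma-1}
\end{equation*}
via the ratio of Gamma factors coming from the connection formula of the two Frobenius solutions (the classical identity $\Gamma(\gamma)\Gamma(1-\gamma)=\pi/\sin\pi\gamma$ reorganizes the quotient into the stated form).

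The main obstacle I expect is the rigorous passage from the fiberwise ODE solution to a genuine smooth solution of the extension problem on $\widehat{\bH}^n_+$, in particular (i) justifying the convergence of the Hermite/Bargmann series for $u\in C^\infty(\CH)$ (as opposed to Schwartz or $L^2$), and (ii) uniqueness, which requires ruling out the unbounded homogeneous solution by a weighted energy argument in the measure $\lambda^a\,dz\,dt\,d\lambda$. The constant $c_\gamma$ itself is a one-variable Bessel/Whittaker asymptotic computation and is the cleanest step once the framework is in place.
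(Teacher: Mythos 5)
Note that this theorem is cited from \cite{FGMT} and not reproved in the present paper; however, the machinery the paper recalls in the proof of Lemma \ref{lem-rad} (group Fourier transform on $\CH$ in the Bargmann representation, reduction to the ODE $q\,\partial_{qq}+\tfrac12\partial_q-\mu^2 q-|\mu|(n+2|\alpha|)$ for the Fourier coefficients, the Kummer/confluent-hypergeometric multiplier $\phi_\alpha$, and the change of variables $q=\lambda^2/2$) is exactly the route you sketch. Your proposal therefore follows the same approach as \cite{FGMT}, and the points you flag as the real work (passing from $L^2$/Schwartz to $C^\infty$ data, uniqueness via a weighted energy argument in $\lambda^a\,dz\,dt\,d\lambda$, and the Gamma-function bookkeeping for $c_\gamma$) are indeed where the substance lies.
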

In \eqref{extension} $\Delta_\bH$ denotes the sublaplacian in the Heisenberg group, which precise definition is given in Section 2 below.
Observe that, differently from the extension result established in \cite{FF}, here we have the additional term $\lambda^2 \frac{\partial^2U}{\partial t^2}$ which appears when one considers CR fractional sub-Laplacian.
When $a=1/2$ the equation in \eqref{extension} satisfied by $U$ becomes:
\begin{equation}\label{ext-1/2}
\frac{\partial ^2 U}{\partial \lambda^2}  +\lambda^2 \frac{\partial^2U}{\partial t^2} +\frac{1}{2}\Delta_\bH U = 0,
\end{equation}
and we have
$$\mathcal P_{\frac{1}{2}} u=-c_{\frac{1}{2}} \lim_{\lambda \rightarrow 0}\frac{\partial U}{\partial \lambda}.$$

Replacing  $\lambda$ in \eqref{ext-1/2} by $\sqrt{2}\lambda$, we will consider the operator
\begin{align}\label{oper-L}
{\mathcal{L}}
= \Delta_\bH + \frac{\partial^2}{\partial \lambda^2}+4\lambda^2\frac{\partial^2}{\partial t^2}.
\end{align}

Our Liouville-type theorem is the analogue, for the fractional operator $\mathcal P_{\frac{1}{2}}$, of a result by Birindelli and Prajapat \cite{BP}, for the sublaplacian $\Delta_\bH$. In \cite{BP}, the authors establish a nonexistence result for a class of positive solution of the equation
\begin{equation}\label{eq-laplacian}-\Delta_\bH u=u^p,
\end{equation}
for $p$ subcritical (i.e. $0<p<\frac{Q+2}{Q-2}$, where $Q=2n+2$ denotes the homogeneous dimension of $\CH$). The technique they used is based on the moving plane method (which goes back to Alexandrov \cite{Alex56} and Serrin  \cite{Serrin}), adapted to the Heisenberg group setting.
This method requires two basic tools: the maximum principle and invariance under reflection with respect to a hyperplane.
Since the operator $-\Delta_\bH$ is not invariant under the usual reflection with respect to hyperplanes, Birindelli
and Prajapat needed to introduce a new reflection,
 called $H$-reflection, under which $-\Delta_\bH$ is invariant.
 Since it will be important in the sequel, we recall here the definition of $H$-reflection.
\begin{Def}
For any $\xi=(x,y,t)\in \CH$, we consider the plane $T_\mu:=\{\xi\in \CH\;:\;t=\mu\}$. We define
$$\xi_\mu:=(y,x,2\mu-t),$$
to be the $H$-reflection of $\xi$ with respect to the plane $T_\mu$.
\end{Def}
Due to the use of this reflection, the proof of the non existence result in \cite{BP} requires the solution $u$ of \eqref{eq-laplacian} to be cylindrical, that is,  $u(x,y,t)=u(r_{0},t)$ must depend only on $r_{0}$ and $t$ where $r_{0}=(|x|^2+|y|^2)^{\frac{1}{2}}$.

We can now state our main result, which is the analogue for the operator $\mathcal P_{\frac{1}{2}}$ of the Liouville result contained in \cite{BP}.
\begin{Thm}\label{main}
Let $0<p<\frac{Q+1}{Q-1}$, where $Q=2n+2$ is the homogeneous dimension of $\CH$. Then there exists no cylindrical solution $u\in C^2(\CH)$ of
\begin{equation}
\begin{cases}
\mathcal P_{\frac{1}{2}}u=u^p &\mbox{in}\;\;\CH,\\
u>0 &\mbox{in}\;\;\CH.
\end{cases}
\end{equation}
\end{Thm}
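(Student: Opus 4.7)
The plan is to lift the problem to the extension half-space via Theorem~\ref{thm-extension} and combine a CR Kelvin-type inversion with the Heisenberg moving plane technique of Birindelli--Prajapat. First I would let $U=\mathcal E_{1/2}u$ be the extension of $u$ and, after the rescaling $\lambda\mapsto\sqrt{2}\lambda$, regard $U$ as the solution of $\mathcal L U=0$ in $\widehat{\bH}^{n}_{+}$ with $U|_{\lambda=0}=u$ and the Neumann-type condition $-c_{1/2}\partial_\lambda U=u^p$ on $\{\lambda=0\}$. Uniqueness of the extension, combined with the cylindrical symmetry of $u$ and the invariance of $\mathcal L$ under rotations in $(x,y)$, implies that $U$ is cylindrical in $(x,y)$; the $H$-reflection, extended to act trivially on $\lambda$, then leaves $\mathcal L$ invariant.

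Next I would perform the CR inversion centered at the origin. With the correct conformal weight of order $Q-1$, one obtains a function $\bar U$ satisfying $\mathcal L\bar U=0$ in $\widehat{\bH}^{n}_{+}\setminus\{(0,0)\}$, together with the boundary condition
\begin{equation*}
-c_{1/2}\lim_{\lambda\to 0^+}\partial_\lambda \bar U\;=\;|\xi|_{\bH}^{-\theta}\,\bar U(\xi,0)^p\qquad\text{on }\CH\setminus\{0\},
\end{equation*}
where $\theta=(Q+1)-p(Q-1)$. The subcritical assumption $p<(Q+1)/(Q-1)$ reads precisely as $\theta>0$, so that the weight $|\xi|_{\bH}^{-\theta}$ is bounded at infinity and the conformal factor forces $\bar U(\xi,0)=O(|\xi|_{\bH}^{-(Q-1)})$ as $|\xi|_{\bH}\to\infty$. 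This is the decay at infinity that the moving plane needs to start.

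Then I would run the moving plane method for the $H$-reflection in $t$. For $\mu\in\R$ set $\Sigma_\mu=\{(\xi,\lambda)\in\widehat{\bH}^{n}_{+}:\,t<\mu\}$, let $\bar U_\mu(\xi,\lambda)=\bar U(\xi_\mu,\lambda)$, and consider $w_\mu=\bar U_\mu-\bar U$. The key steps are: (i) show $w_\mu\ge 0$ in $\Sigma_\mu$ for $\mu$ sufficiently negative, using the decay of $\bar U$ at infinity and the control of the singularity at the origin guaranteed by $\theta>0$; (ii) define $\mu_0=\sup\{\mu:w_\nu\ge 0\text{ in }\Sigma_\nu\text{ for every }\nu\le\mu\}$ and, assuming $\mu_0<+\infty$, reach a contradiction by combining a strong maximum principle and a Hopf boundary lemma for $\mathcal L$ with the strict positivity of the boundary weight; (iii) conclude $\mu_0=+\infty$, and the symmetric argument in the opposite direction forces $\bar U$ to be $H$-symmetric about every plane. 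Together with the decay at infinity, this implies $\bar U\equiv 0$, contradicting $u>0$.

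The main obstacle is twofold. Analytically, the anisotropic term $4\lambda^2\partial_t^2$ in $\mathcal L$ prevents one from directly invoking existing theory for $A_2$-weighted degenerate equations, so the strong maximum principle and the Hopf lemma must be proved for $\mathcal L$ itself, with particular care at boundary points of $\Sigma_\mu$ lying on $\{\lambda=0\}$, where this term degenerates while the trace equation remains active. Geometrically, the behavior of $\bar U$ and $w_\mu$ near the singular image of the origin under the CR inversion has to be controlled precisely; it is exactly here that the subcritical exponent $p<(Q+1)/(Q-1)$ enters through the positivity of $\theta$, rendering the singular weight harmless and allowing both the initiation and the continuation of the sliding plane.
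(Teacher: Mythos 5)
Your high-level strategy coincides with the paper's: lift $u$ to the extension $U$ in $\widehat{\bH}^{n}_{+}$, use the CR inversion centred at the origin to produce a function $\bar U$ with decay at infinity and a weighted Neumann condition, and run a moving plane in $t$ via the $H$-reflection. The paper likewise deals with the cylindricality of $U$ (its Lemma~\ref{lem-rad}, proved by the Fourier/Bargmann representation rather than your uniqueness-plus-rotation-invariance argument, which is the more natural route but is stated only informally in your plan).

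The gap is in your step (iii). You claim to derive a contradiction from $\mu_0<+\infty$ and thus conclude $\mu_0=+\infty$. This cannot work: the CR inversion is centred at the origin, which lies \emph{on} the plane $\{t=0\}$, so for $\mu>0$ the half-space $\Sigma_\mu$ contains the singular point of $\bar U$ while the reflected point $z_\mu$ is regular, forcing $W_\mu=\bar U(z_\mu)-\bar U(z)\to-\infty$ near the origin. Moreover the favourable monotonicity $\rho(z_\mu)\le\rho(z)$ of the boundary weight $\rho^{-\theta}$ (which is precisely where $\theta>0$, i.e.\ subcriticality, enters) holds only for $\mu\le 0$; for $\mu>0$ the comparison of the Neumann data breaks down. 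So the plane can move only up to $\mu_0=0$, and at $\mu_0=0$ there is no contradiction to reach. The paper's actual endgame is therefore different and essential: it shows $\mu_0=0$, performs the same argument from the positive side to get that $w$ is even in $t$ about $t=0$, then translates the $t$-origin of the CR inversion arbitrarily to conclude that $U$ is constant in the $t$-direction, and finally observes that a $t$-independent $U$ satisfies the classical Euclidean problem $\Delta U=0$ in $\R^{2n+1}_+$ with $-\partial_\lambda U=U^p$ on the boundary, to which the known Liouville theorem of Chipot--Chleb\'ik--Fila--Shafrir and Li--Zhang applies (noting that $\frac{Q+1}{Q-1}<\frac{2n+1}{2n-1}$). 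You should replace step (iii) by this two-stage argument (evenness at $\mu_0=0$, then $t$-translation and reduction to the classical half-space Liouville theorem), rather than trying to push $\mu_0$ to $+\infty$.
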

Using the local formulation \eqref{extension} established in \cite{FGMT}, the above theorem will follow as a corollary of the following Liouville-type result for a nonlinear Neumann problem in the half-space $\CH\times \R^+$.
\begin{Thm}\label{main2}
Let $0<p<\frac{Q+1}{Q-1}$ and $U\in C^2(\CH\times\R^+)\cap C^1(\overline{\CH\times\R^+})$  be a nonnegative solution of
\begin{equation}\label{eq-ext}
\begin{cases}
\displaystyle\frac{\partial^2 U}{\partial \lambda^2}+4\lambda^2 \frac{\partial^2U}{\partial t^2}+\Delta_\bH U=0&\mbox{in}\:\:\CH\times \R^+,\\
\displaystyle -\frac{\partial U}{\partial \lambda}=U^p &\mbox{on}\:\:\CH\times \{\lambda=0\}.
\end{cases}
\end{equation}
Suppose that $U(x,y,t,\lambda)=U(r_{0},t,\lambda)$ depends only on $r_0,t, \lambda$, where $r_{0}=\left(|x|^2+|y|^2\right)^{\frac{1}{2}}$. Then $U\equiv 0$.
\end{Thm}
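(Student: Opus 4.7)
The proof strategy combines a CR (Kelvin-type) inversion with the moving-plane method based on the $H$-reflection of Definition~1.1. The starting structural fact is that the degenerate operator $\mathcal{L} = \Delta_{\bH} + \partial_{\lambda}^{2} + 4\lambda^{2}\partial_{t}^{2}$ (extended trivially in $\lambda$) commutes with the $H$-reflection: $\Delta_{\bH}$ is $H$-invariant, which is precisely why this reflection was introduced in \cite{BP}, and the weight $4\lambda^{2}$ is preserved by $t \mapsto 2\mu - t$. Restricted to cylindrical functions $U(r_{0}, t, \lambda)$, the $H$-reflection reduces to ordinary reflection in the $t$-variable, so the moving-plane procedure will be performed in that direction.

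First, I would fix a center $(\xi_{0}, 0) \in \CH \times \{0\}$ and consider the CR Kelvin transform of weight $Q - 2\gamma = Q - 1$,
$$V(\xi, \lambda) = \rho_{\xi_{0}}(\xi, \lambda)^{-(Q-1)}\, U\!\left(I_{\xi_{0}}(\xi, \lambda)\right),$$
where $\rho_{\xi_{0}}$ is the CR gauge distance from $(\xi_{0}, 0)$ and $I_{\xi_{0}}$ the CR inversion associated with the complex hyperbolic (Siegel) structure exploited in \cite{FGMT}. A computation patterned on the Euclidean Caffarelli-Silvestre Kelvin transform shows that $\mathcal{L} V = 0$ on $(\CH \times \R^{+}) \setminus \{(\xi_{0}, 0)\}$ and that the nonlinear Neumann condition transforms into
$$-\frac{\partial V}{\partial \lambda} = \rho_{\xi_{0}}(\xi, 0)^{-\tau}\, V^{p}, \qquad \tau := (Q+1) - (Q-1)p,$$
with $\tau > 0$ exactly because $p$ is subcritical. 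The weight $\rho_{\xi_{0}}^{-\tau}$ decays at infinity and concentrates only at the pole, and the Kelvin factor forces $V = O(\rho_{\xi_{0}}^{-(Q-1)})$ at infinity --- the structure needed to initiate moving planes.

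Next, I would set $T_{\mu} = \{t = \mu\}$, $\Sigma_{\mu} = \{t > \mu,\ \lambda > 0\}$, and $W_{\mu}(\xi, \lambda) = V(\xi_{\mu}, \lambda) - V(\xi, \lambda)$. The $H$-invariance of $\mathcal{L}$ gives $\mathcal{L} W_{\mu} = 0$ in $\Sigma_{\mu}$; the boundary condition linearizes via $a^{p} - b^{p} = p\,\theta^{p-1}(a - b)$ into a Neumann-type identity with controlled coefficient. Using the Kelvin decay of $V$, one verifies $W_{\mu} \geq 0$ on $\Sigma_{\mu}$ for all $\mu$ sufficiently large, and then defines
$$\mu_{0} = \inf\{\mu : W_{\mu'} \geq 0 \text{ in } \Sigma_{\mu'} \text{ for all } \mu' > \mu\}.$$
Applying the maximum principle for $\mathcal{L}$ (the Caffarelli-Silvestre drift term $(a/\lambda)\partial_{\lambda}$ is absent here since $a = 1 - 2\gamma = 0$) together with a Hopf-type boundary lemma at $\{\lambda = 0\}$, one obtains the alternative: either $W_{\mu_{0}}$ is strictly positive in $\Sigma_{\mu_{0}}$ (contradicting the definition of $\mu_{0}$), or $W_{\mu_{0}} \equiv 0$, in which case $V$ is $H$-symmetric across $T_{\mu_{0}}$. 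The latter requires the boundary weight $\rho_{\xi_{0}}(\cdot, 0)^{-\tau}$ also to be symmetric across $T_{\mu_{0}}$, which forces $\xi_{0} \in T_{\mu_{0}}$. Varying the $t$-coordinate of $\xi_{0}$ freely, the corresponding $\mu_{0}(\xi_{0})$ sweeps all of $\R$; translating back via the Kelvin transform this is equivalent to $U$ being invariant under reflection in every plane $T_{\mu}$, hence $U$ is independent of $t$. The equation then reduces to $\Delta_{\bH} U + \partial_{\lambda}^{2} U = 0$ on $\R^{2n} \times \R^{+}$ with $-\partial_{\lambda} U = U^{p}$ on $\{\lambda = 0\}$, which is the Caffarelli-Silvestre extension problem for $(-\Delta)^{1/2} u = u^{p}$ on $\R^{2n}$; since $p < (Q+1)/(Q-1) < (2n+1)/(2n-1)$, this is subcritical in the Euclidean sense and a standard Liouville theorem à la Chen-Li-Ou yields $U \equiv 0$.

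The hardest steps are the moving-plane mechanics: establishing the maximum principle and a Hopf-type lemma at the degenerate boundary $\{\lambda = 0\}$ for $\mathcal{L}$ with its anisotropic weight $4\lambda^{2}\partial_{t}^{2}$, and verifying that the CR Kelvin transform maps the extension equation to itself while producing the precise weight $\rho_{\xi_{0}}^{-\tau}$ with the exponent $\tau = (Q+1) - (Q-1)p$. The conformal covariance of $\mathcal{L}$ under the CR inversion --- essentially an algebraic identity encoded in the Siegel geometry of \cite{FGMT} --- is technically delicate because the CR inversion on the Heisenberg half-space is less transparent than the classical Kelvin transform on $\R^{n} \times \R^{+}$, and any mismatch of exponents would derail the subcriticality argument used to close the proof.
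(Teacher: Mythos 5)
Your overall strategy is the same as the paper's: perform the CR inversion of $U$ about a point on the boundary of $\widehat{\bH}^n_+$, obtain the weighted Neumann problem with weight $\rho^{-\tau}$, $\tau = (Q+1)-(Q-1)p>0$, move $H$-reflecting hyperplanes $T_\mu$ in the $t$-direction, invoke Bony's maximum principle and a Hopf lemma at the degenerate boundary, conclude $t$-independence, and close with the classical Liouville theorem for the Caffarelli--Silvestre extension on $\R^{2n}\times\R^+$. Your identification of $\tau$, the role of the subcriticality inequality $\frac{Q+1}{Q-1}<\frac{2n+1}{2n-1}$, the fact that $\rho(\xi_\mu)\le\rho(\xi)$ makes the linearized Neumann coefficient point the right way, and the final observation that $W_{\mu_0}\equiv 0$ forces the inversion pole onto $T_{\mu_0}$ --- all of these are correct and match the paper.

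There is one genuine gap in the moving-plane step. You write that if $W_{\mu_0}$ is strictly positive in $\Sigma_{\mu_0}$ this ``contradicts the definition of $\mu_0$,'' but in this unbounded half-space with a singular pole and a spatially decaying Neumann coefficient, strict positivity of $W_{\mu_0}$ does not by itself allow you to move the plane past $\mu_0$. You must rule out that $\inf_{\Sigma_\mu}W_\mu$ drifts to zero from below at infinity or at the reflected pole $e_\mu$ as $\mu$ crosses $\mu_0$. The paper handles this with a compactness argument: it introduces a barrier $h_0 = \rho(z+\beta_2 e_{2n+2})^{-\beta_1}$ with $0<\beta_1<Q-1$ and $p\widetilde C^{p-1}<\beta_1\beta_2$, works with the quotient $\widetilde W_\mu = W_\mu/h_0$, and proves (Proposition~5.1) that a negative infimum of $\widetilde W_\mu$ is attained and is confined to a fixed compact set uniformly in $\mu\le\mu_0$. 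Only then does the sequence of minimizers converge and the Hopf lemma yield a contradiction. Without this quantitative control, your claimed dichotomy does not close the argument. Apart from this elision, the proposal coincides with the paper's proof.
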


In the Euclidean case, classical nonexistence results for subcritical nonlinear problems in the all space $\R^n$
are contained in two works by Gidas and Spruck \cite{GS} and by Chen and Li \cite{CL}. Analogue results for nonlinear Neumann problems in the half-space $\R^n_+$ where established in \cite{LZ, LZhu}, using the methods of moving planes and moving spheres.

In the Heisenberg group setting there are several papers concerning nonexistence results for problem \eqref{eq-laplacian}.   Garofalo and Lanconelli \cite{GL} proved some nonexistence results for positive solutions of \eqref{eq-laplacian} when $p$ is subcritical, under some integrability conditions on $u$ and $\nabla u$. In \cite{LU, U} similar nonexistence results for positive solutions of \eqref{eq-laplacian} in the half-space are established for the critical exponent $p=\frac{Q+2}{Q-2}$. In \cite{BCCD}, a Liouville-type result for solution of \eqref{eq-laplacian} is proved without requiring any decay condition on $u$, but only for $0<p<\frac{Q}{Q-2}$. As explained before, in \cite{BP} Birindelli and Prajapat extends this last result to any $0<p<\frac{Q+2}{Q-2}$ but only in the class of cylindrical solution. A last more recent result in this context was proven by  Xu in \cite{LX}, who established that there are no positive solution of \eqref{eq-laplacian} for $0<p<\frac{Q(Q+2)}{(Q-1)^2}$. This result uses a different technique, based on the vector field method, and improves the results contained in \cite{GL} and \cite{BCCD}, since it does not require any decay on the solution $u$ and it improves the exponent $p$. Nevertheless it seems not allow  to reach the optimal exponent $\frac{Q+2}{Q-2}$ (observe that $\frac{Q}{Q-2}<\frac{Q(Q+2)}{(Q-1)^2}<\frac{Q+2}{Q-2}$).

In this paper we aim to establish a first Liouville-type result for a CR fractional power of $-\Delta_\bH$; this is, to our knowledge, the first nonexistence result in this fractional setting.

Let us comment now on the basic tools in the proof of our main result. Following \cite{BP}, in order to get a nonexistence result, we combine the method of moving planes with the CR inversion of the solution $u$.

The CR inversion was introduced by Jerison and Lee in \cite{JL}, and it is the analogue of the Kelvin transform, in the Heisenberg group context. In Section 3 we will give the precise definition of CR inversion and we will show which problem is satisfied by the CR inversion of a solution of \eqref{eq-ext}.

As said before, the moving plane method is based on several version of the maximum principles. More precisely we will recall the classical Bony's  maximum principle and we will prove two versions of the Hopf's Lemma (see Propositions \ref{hopf0} and \ref{hopf}).

The paper is organized as follows:
\begin{itemize}
\item in Section 2 we recall some basic facts on the Heisenberg group and we will introduce the fractional CR operator $\mathcal P_{\frac{1}{2}}$;
\item in Section 3 we will introduce the CR inversion of a function $u$ and prove  a lemma concerning the CR inversion of a solution of our problem \eqref{eq-ext};
\item in Section 4 we establish a maximum principle and Hopf's Lemma for our operator, which will be basic tools in the method of moving planes;
\item in Section 5 we will prove our main results Theorems \ref{main} and \ref{main2}.
\end{itemize}

\section{Preliminary facts on the Heisenberg group}
\setcounter{equation}{0}
%%%%%%%%%%%%%%%%%%%%%%%%%%%%%%%%%%%%%%%%%%%%%%%%%%%%%%%%%%%%%%%%%%%%%%%%%%%%%%%%%%%%%%%
In this section we recall some basic notions and properties concerning the Heisenberg group, see also \cite{BLU} and \cite{Stein93}.

We will denote the points in $\mathbb H^n$ using the notation $\xi=(x,y,t)=(x_1,...,x_n,y_1,...,y_n,t)\in \R^n\times \R^n\times \R$.
The Heisenberg group $\mathbb H^n$ is the space $\R^{2n+1}$ endowed with the group law $\circ$ defined in the following way:
$$  \hat{\xi} \circ \xi:=(  \hat{x}+x,  \hat{y}+y,  \hat{t}+ t +2\sum_{j=1}^n (x_j  \hat{y}_j -y_j \hat{x}_j)).$$
The natural dilation of the group is given by $\delta_{\ell}(\xi):=(\ell x, \ell y, \ell^2 t)$, and it satisfies $\delta_{\ell}(  \hat{\xi} \circ \xi)=\delta_{\ell}(  \hat{\xi})\circ \delta_{\ell}(\xi)$.

In $\mathbb H^n$ we will consider the gauge norm defined as
$$|\xi|_{\bH}:= \big[ \big(\sum_{i=1}^n (x_i^2+y_i^2)\big)^2 + t^2\big]^{\frac{1}{4}},$$
which is homogeneous of degree one with respect to $\delta_{\ell}$. Using this norm, one can define the distance between two points in the natural way:
$$d_{\bH}( \hat{\xi}, \xi)=|\hat{\xi}^{-1}\circ \xi|_{\bH},$$
where $\hat{ \xi}^{-1}$ denotes the inverse of $ \hat{\xi}$ with respect to the group action. We denote the ball associated to the gauge distance by
$$B_{\bH}(\xi_0,R):=\{\xi \in \bH^n\,:\,d_{\bH}(\xi,\xi_0)<R\}.$$
Denoting by $|A|$ the Lebesgue measure of the set $A$, we have that
$$|B_{\bH}(\xi_0,R)|=|B_{\bH}(0,R)|=R^Q|B_\bH(0,1)|.$$
Here $Q=2n+2$ denotes the homogeneous dimension of $\bH^n$.

For every $j=1,\cdots,n$, we denote by $X_j$, $Y_j$, and $T$ the following vector fields:
$$X_j=\frac{\partial}{\partial x_j} + 2y_j \frac{\partial}{\partial t},\quad Y_j=\frac{\partial}{\partial y_j} - 2x_j \frac{\partial}{\partial t}, \quad T=\frac{\partial}{\partial t}.$$
They form a basis of the Lie Algebra of left invariant vector fields. Moreover, an easy computions shows that $[X_k,Y_j]=-4\delta_{kj}T$. The Heisenberg gradient of a function $f$ is given by $$\nabla_{\bH}f=(X_1f,\cdots,X_n f,Y_1 f,\cdots,Y_n f).$$ Finally, we define the sublaplacian as
\begin{eqnarray*}
&&\Delta_\bH:=\sum_{j=1}^n (X_j^2+Y_j^2)\\
&&\hspace{1em} =\sum_{j=1}^n \frac{\partial^2}{\partial x_j^2} +\frac{\partial^2}{\partial y_j^2} + 4 y_j \frac{\partial^2}{\partial x_j \partial t} - 4 x_j \frac{\partial^2}{\partial y_j \partial t} + 4(x_j^2+y_j^2) \frac{\partial^2}{\partial t^2}.
\end{eqnarray*}
It can be written also in the form $\Delta_\bH=\mbox{div}(  \overline{A}\nabla^T)$, where $\overline A=\overline{a}_{kj}$ is the $(2n+1)\times (2n+1)$ symmetric matrix given by $\overline{a}_{kj}=\delta_{kj}$ for $k,j=1,...,2n$, $\overline{a}_{j(2n+1)}=\overline{a}_{(2n+1)j}=2y_j$ for $j=1,...,n$, $\overline{a}_{j(2n+1)}=\overline{a}_{(2n+1)j}=-2x_j$ for $j=n+1,...,2n$ and $\overline a_{(2n+1)(2n+1)}=4(|x|^2+|y|^2)$. It is easy to observe that $\overline{A}$ is positive semidefinite for any $(x,y,t)\in \mathbb{H}^n$.
This operator is degenerate elliptic, and it is hypoelliptic since it satisfies the H\"ormander condition.

\medskip

We pass now to describe CR covariant operators of fractional orders in $\CH$. For more precise notions of CR geometry and for the construction of CR covariant fractional powers of the sub-Laplacian on more general CR manifolds, we refer to \cite{FGMT} and references therein. Here we just consider the case of the Heisenberg group, since it is the one of interest.

Introducing complex coordinates $\zeta=x+iy \in \C^n$, we can identify the Heisenberg group $\CH$ with the boundary of the Siegel domain $\Omega_{n+1}\subset \mathbb C^{n+1}$, which is  given by
$$\Omega_{n+1}:=\left\{(\zeta_1,\ldots,\zeta_{n+1})=(\zeta,\zeta_{n+1})\in\C^{n}\times \C
\,|\,q(\zeta,\zeta_{n+1})>0\right\},$$
with
$$q(\zeta,\zeta_{n+1})=\im \zeta_{n+1}-\sum_{j=1}^n|\zeta_j|^2,$$
through the map $(\zeta,t)\in \CH \rightarrow (\zeta,t+i|\zeta|^2)\in \partial \Omega_{n+1}$.
It is possible to see that $\mathcal X=\Omega_{n+1}$ is a K\"ahler-Einstein manifold, endowed with a K\"ahler form $\omega_+$ (which precise expression can be found in formula (1.4) in \cite{FGMT}), and a corresponding K\"ahler metric $g_+$ (see formula (1.6) in \cite{FGMT}). Using this metric, one can see that  $\Omega_{n+1}$ can be identified with the complex hyperbolic space. The boundary manifold $\mathcal M=\partial \Omega_{n+1}$ inherits a natural CR structure from the complex structure of the ambient manifold. Given a CR structure, it is possible to associate to it a contact form $\theta$, that in the specific case of the Heisenberg group, is given by %
\begin{equation}\label{theta}
\theta=\left[ dt + \frac{1}{2}\sum_{j=1}^n (x_j dy_j - y_j dx_j) \right].
\end{equation}
This form satisfies $\theta(T)=1$.

Scattering theory tells us that for $s\in\mathbb C$, $\re(s)> \frac{m}{2}$, and except for a set of exceptional values, given $f$ smooth on $\mathcal M$, the eigenvalue equation
\begin{equation*}
-\Delta_{g^+} u -s(m-s)u=0,\quad\text{in }\mathcal X
\end{equation*}
has a solution $u$ with the expansion
\begin{equation*}
\begin{cases}
u=q^{(m-s)}F+q^{s} G & \text{for some}\quad F,G\in C^\infty(\overline{\mathcal X}),\\
 F|_{\mathcal M}=f.
\end{cases}\end{equation*}
The scattering operator is defined as
$$S(s): \mathcal C^\infty(\mathcal M) \to \mathcal C^\infty(\mathcal M)$$
by
$$S(s)f:=G|_{\mathcal M}.$$
%Note that $S(s)$ is a meromorphic family of pseudodifferential operators in the $\Theta$-calculus of \cite{Epstein91} on $\mathcal M$ of order $2(2s-m)$, and it is self-adjoint when $s$ is real.

 We set $s=\frac{m+\gamma}{2}$, for $\gamma\in(0,m)\backslash\mathbb N$. The conformal fractional sub-Laplacian on $\CH=\mathcal M$ (associated to the contact form $\theta$) is defined in the following way:
\begin{equation}\label{operators}\mathcal{P}^\theta_\gamma f=C_\gamma S(s)f,\end{equation}
for a constant
$$C_\gamma=2^{2\gamma-1}\frac{\Gamma(\gamma)}{\gamma\Gamma(-\gamma)}.$$
%$P_\gamma^\theta$ is a pseudo-differential operator of order $2\gamma$ its principal symbol is given by \eqref{symbol}.
%
For $\gamma=1$ and $\gamma=2$ we have (see \cite{FGMT}):
$$P^\theta_1 =-\Delta_{\mathbb H}\quad \mbox{and}\quad P^\theta_2 =\Delta^2_{\mathbb H}+T^2.$$
A crucial property of $\mathcal{P}^\theta_\gamma$ is its conformal covariance. Indeed, if we consider a conformal change of the contact form $\hat \theta= w^{\frac{2}{n+1-\gamma}}\theta$, then the corresponding fractional operator is given by:
$$\mathcal P_\gamma^{\hat\theta}(\cdot)= w^{-\frac{n+1+\gamma}{n+1-\gamma}}\mathcal P^\theta_\gamma (w\:\:\cdot).$$
For further details on CR covariance and on the geometric properties of the operator $\mathcal P_\gamma^\theta$ we refer to \cite{FGMT}; here we just emphasize that this covariance property is reflected in the fact that the extension operator $\mathcal L$ defined in \eqref{oper-L} well behaves under $CR$ inversion (as we will see later in Section 3), and this will be crucial in the proof of our main result.
\medskip

%Indeed, given another conformal representative $\hat \theta=w^{\frac{2}{m-\gamma}}\theta$, then the corresponding operator is given by
%$$P_\gamma^{\hat\theta}(\cdot)=w^{-\frac{m+\gamma}{m-\gamma}}P_\gamma^\theta(w\,\cdot).$$

As explained in the introduction, one of the main result in \cite{FGMT},
is the characterization of these fractional operators via
the extension problem \eqref{extension}. Since throughout this paper
we will work on this lifted problem in the extended space, let us introduce some notations in $\widehat{\bH}^{n}={\bH}^{n}\times \R^+$.

Since, the contact form $\theta$ is fixed (and it is the one defined in \eqref{theta}), for simplicity of notations we will write $\mathcal P_\gamma$ instead of $\mathcal P^\theta_\gamma$.

Analougsly to $\bH^n$, in $\widehat{\bH}^{n}$ we define the following group low (that for simplicity of notation we still denote by $\circ$):\\
for $z=(x_1,\cdots,x_n,y_1,\cdots,y_n,t,\lambda)\in \widehat{\bH}^{n}$ and
$\hat{z}=(\hat{x}_1,\cdots, \hat{x}_n, \hat{y}_1,\cdots,
\hat{y}_n,  \hat{t}, \hat{\lambda})\in \widehat{\bH}^{n}$, we set
$$\hat z \circ z:=(\hat x+x,\hat y + y, \hat t + t +2\sum_{j=1}^n(x_j \hat y_j - y_j\hat x_j), \hat\lambda + \lambda).$$
Moreover we consider  the norm given by
$$|z|_{\widehat{\bH}^{n}}:=[(|x|^2+|y|^2+\lambda^2)^2+ t^2]^{\frac{1}{4}}.$$
Finally we  denote  the distance   $d_{\widehat{\bH}}$ between
  $z$ and $\hat{z}$,  by
$$d_{\widehat{\bH}}(z, \hat{z}):=|\hat z^{-1} \circ z|_{\widehat{\bH}^{n}}.$$
Observe that when $\lambda=\hat\lambda=0$, that is $z$ and $\hat z$ belong to $\CH$,
$d_{\widehat{\bH}}(z, \hat{z})=d_{\bH}(z, \hat{z})$.
Moreover, given $\bar z \in   \widehat{\bH}^{n}$ we set
$${\mathcal{B}}(\bar z,R)=\{z\in \mathbb C^{n+1}\mid d_{\widehat{\bH}}(z,\bar z)<R\}$$
and for any $z_0\in \CH\times \{0\}$ we denote
$${\mathcal{B}}^+(z_0,R)=\{z\in \CH\times \R^+\mid d_{\widehat{\bH}}(z,z_0)<R\,,\lambda>0\}.$$

The operator  $\mathcal L $ (defined in \eqref{oper-L}),
  writing explicitly all the terms, becomes
\begin{align*}
{\mathcal{L}}&=\frac{\partial^{2}}{\partial \lambda^{2}}+\sum_{j=1}^n\left(\frac{\partial^{2}}{\partial x_{j}^{2}}+\frac{\partial^{2}}{\partial y_{j}^{2}}
+4y_{j}\frac{\partial^{2}}{\partial x_{j}\partial t }-4x_{j}\frac{\partial^{2}}{\partial y_{j}\partial t }\right)\\
&\hspace{1em}+
4(\lambda^{2}+\sum_{j=1}^n(x_{j}^{2}+y_{j}^{2}))\frac{\partial^{2}}{\partial t^{2}}.\end{align*}
Also in this case, we can write $\mathcal{L}=\mbox{div}(A\nabla^T)$, where now
 $A$ is the $(2n+2)\times (2n+2)$ symmetric matrix given by $a_{kj}=\delta_{kj}$ if $k,j=1,\cdots, 2n$,
$a_{j(2n+1)}=a_{(2n+1)j}=2y_{j}$  if $j=1,\cdots, n$, $a_{j(2n+1)}=a_{(2n+1)j}=-2x_{j}$  if $j=n+1,\cdots, 2n$,
$a_{(2n+1)(2n+1)}=4(|x|^{2}+|y|^2+\lambda^2)$,  $a_{(2n+2)(2n+2)}=1$, $a_{j(2n+2)}= a_{(2n+2)j}=0$ if $j=1,\cdots, 2n+1$.

In the sequel it will be useful to express $\mathcal L$ for cylindrical and radial functions.

For a point $z=(x,y,t,\lambda)\in {\widehat{\bH}^n}$, let
\begin{align*}
r&=(|x|^{2}+|y|^{2}+\lambda^{2})^{1/2},\\
\rho&=(r^{4}+t^{2})^{1/4}.
\end{align*}
 Suppose that $\Psi$ is a radial function, that is, $\Psi$ depends only on $\rho$; then
a direct computation gives:
 \begin{align*}
&\frac{\partial \Psi}{\partial x_{j}}=\frac{\partial \Psi}{\partial \rho}\frac{\partial \rho}{\partial x_{j}}=
\rho^{-3}r^{2}x_{j}\frac{\partial \Psi}{\partial \rho}, \;\;\;
\frac{\partial \Psi}{\partial y_{j}}=\frac{\partial \Psi}{\partial \rho}\frac{\partial \rho}{\partial y_{j}}=
\rho^{-3}r^{2}y_{j}\frac{\partial \Psi}{\partial \rho},\\
&\frac{\partial \Psi}{\partial \lambda}=\frac{\partial \Psi}{\partial \rho}\frac{\partial \rho}{\partial \lambda}=
\rho^{-3}r^{2}\lambda\frac{\partial \Psi}{\partial \rho}.
\end{align*}
Then we deduce that
\begin{align*}
&\frac{\partial^{2} \Psi}{\partial x_{j}^{2}}=
\frac{ r^{4}x_{j}^{2}}{\rho^{6}}\frac{\partial^{2} \Psi}{\partial \rho^{2}}+\frac{\rho^{4}r^{2}+2\rho^{4}x_{j}^{2}-3r^{4}x_{j}^{2}}{\rho^{7}}\frac{\partial \Psi}{\partial \rho}, \\
&\frac{\partial^{2} \Psi}{\partial y_{j}^{2}}=
\frac{ r^{4}y_{j}^{2}}{\rho^{6}}\frac{\partial^{2} \Psi}{\partial \rho^{2}}+\frac{\rho^{4}r^{2}+2\rho^{4}y_{j}^{2}-3r^{4}y_{j}^{2}}{\rho^{7}}\frac{\partial \Psi}{\partial \rho},\\
&\frac{\partial^{2} \Psi}{\partial \lambda^{2}}=
\frac{ r^{4}\lambda^{2}}{\rho^{6}}\frac{\partial^{2} \Psi}{\partial \rho^{2}}+\frac{\rho^{4}r^{2}+2\rho^{4}\lambda^{2}-3r^{4}\lambda^{2}}{\rho^{7}}\frac{\partial \Psi}{\partial \rho},\\
&\frac{\partial^{2} \Psi}{\partial t^{2}}=
\frac{ t^{2}}{4\rho^{6}}\frac{\partial^{2} \Psi}{\partial \rho^{2}}+\frac{2\rho^{4}-3t^{2}}{4\rho^{7}}\frac{\partial \Psi}{\partial \rho}.
\end{align*}
Hence, by using that
\begin{align*}
\frac{1 }{\rho^{6}}( \sum_{j=1}^n(r^{4}x_{j}^{2}+r^{4}y_{j}^{2})+r^{4}\lambda^{2}+r^{2}t^{2})=\frac{r^{2}}{\rho^{2}}
\end{align*}
and
\begin{align*}
&\frac{1}{\rho^{7}}\big[(2n+1)\rho^{4}r^{2}+\big(2\rho^{4}
 -3r^{4}+  (2\rho^{4}-3t^{2})\big) (\sum_{j=1}^{n}(x_{j}^{2}+ y_{j}^{2})
+\lambda^{2})\big]
\\
=&\frac{1}{\rho^{7}}[(2n+1)\rho^{4}r^{2}+2\rho^{4}r^{2}
-3r^{6}+ r^{2}(2\rho^{4}-3t^{2})]
=\frac{Q r^{2}}{\rho^{3}},
\end{align*}
we conclude that
\begin{align}\label{radial}
{\mathcal{L}}\Psi(\rho)=\frac{r^2}{\rho^{2}}\left(\frac{d^{2} \Psi(\rho)}{d\rho^{2}}+
\frac{Q}{ \rho}\frac{d \Psi(\rho)}{d \rho}\right).
\end{align}

In a similar way, we deduce that for a cylindrical symmetric function $\phi=\phi(r,t)$,
\begin{align}
{\mathcal{L}}\phi=\frac{\partial^{2}\phi}{\partial r^{2}}+\frac{Q-2}{r}\frac{\partial\phi}{\partial r}+
4r^{2}\frac{\partial^{2}\phi}{\partial t^{2}}.
\end{align}
Using the radial form \eqref{radial} for $\mathcal L$, an easy computation yields the following lemma.
\begin{Lem}\label{foundamental}
Let
$\psi(\rho)=\frac{1}{\rho^{Q-1}}=\frac{1}{\rho^{2n+1}}$ for $\rho\neq 0$.
 Then we have that
\begin{equation}
\begin{cases}
\mathcal L\psi(\rho)=0 &\text{in} \:\:\CH\times \R^+\setminus \{0\},\\
-\frac{\partial}{\partial \lambda}\psi(\rho)=0 &\mbox{on}\:\:\CH \times \{\lambda=0\}\setminus \{0\}.
\end{cases}
\end{equation}
\end{Lem}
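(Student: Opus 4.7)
The plan is to verify both identities by direct substitution, using the radial formula \eqref{radial} for $\mathcal{L}$ and the chain rule computation for $\partial_\lambda$ carried out just above the statement. No subtle argument is needed; the lemma is a quick explicit check, and the only mild care is to remember that the factor $r^2/\rho^2$ in \eqref{radial} still allows us to conclude that the ODE in brackets must vanish (away from the origin, where $\rho\neq 0$).

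For the interior equation, I would compute $\psi'(\rho)=-(Q-1)\rho^{-Q}$ and $\psi''(\rho)=Q(Q-1)\rho^{-(Q+1)}$, and then plug into the one-dimensional operator that appears in \eqref{radial}:
\begin{equation*}
\frac{d^2\psi}{d\rho^2}+\frac{Q}{\rho}\frac{d\psi}{d\rho}=Q(Q-1)\rho^{-(Q+1)}-\frac{Q(Q-1)}{\rho}\rho^{-Q}=0.
\end{equation*}
Multiplying by $r^2/\rho^2$ (which is finite for $\rho\neq 0$), \eqref{radial} yields $\mathcal{L}\psi\equiv 0$ on $\CH\times\R^+\setminus\{0\}$.

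For the Neumann-type condition, I would recall the identity
\begin{equation*}
\frac{\partial\psi}{\partial\lambda}=\frac{\partial\psi}{\partial\rho}\frac{\partial\rho}{\partial\lambda}=\rho^{-3}r^{2}\lambda\,\psi'(\rho),
\end{equation*}
established in the chain-rule computation preceding the lemma. The explicit factor $\lambda$ on the right-hand side immediately forces $\partial_\lambda\psi\big|_{\lambda=0}=0$ at every point of $\CH\times\{0\}$ with $\rho\neq 0$, which is the claim.

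There is no real obstacle here: the statement essentially records that $\rho^{-(Q-1)}$ is the fundamental solution of the radialised operator and that its normal derivative on the characteristic hyperplane $\{\lambda=0\}$ vanishes. The only place one must be careful is to exclude the origin, where $\rho=0$ and $\psi$ is singular; everywhere else on $\CH\times\overline{\R^+}\setminus\{0\}$ the two identities hold pointwise by the short computation above. This lemma will then serve, together with Bony's maximum principle and the Hopf lemmas proved later, as the comparison function in the moving plane analysis of Theorem \ref{main2}.
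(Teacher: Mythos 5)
Your proposal is correct and is exactly the computation the paper has in mind when it says "Using the radial form \eqref{radial} for $\mathcal L$, an easy computation yields the following lemma": one checks that $\psi(\rho)=\rho^{-(Q-1)}$ annihilates the bracketed ODE $\psi''+\frac{Q}{\rho}\psi'$ in \eqref{radial}, and the vanishing of $\partial_\lambda\psi$ at $\lambda=0$ follows from the explicit factor $\lambda$ in the chain-rule identity $\partial_\lambda\Psi=\rho^{-3}r^2\lambda\,\Psi'(\rho)$.
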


%Note that $\rho^{4}=
%r^{4}+t^{2}=(x^{2}+y^{2}+\lambda^{2})^{2}+t^{2}$.
%\begin{align*}
%&\frac{\partial}{\partial r}(\frac{1}{\rho^{Q-2}})=(2-Q)\frac{r^{3} }{\rho^{Q+2}},\\
%&(Q-3)\frac{1}{r}\frac{\partial}{\partial r}(\frac{1}{\rho^{Q-2}})=(2-Q)(Q-3)\frac{r^{2} }{\rho^{Q+2}}=
%(2-Q)(Q-3)\frac{r^{6}+r^{4} t^{2}  }{\rho^{Q+6}},\\
%&\frac{\partial^{2}}{\partial r^{2}}(\frac{1}{\rho^{Q-2}})=(2-Q)\frac{3r^{2}}{\rho^{Q+2}}-(2-Q)(Q+2)\frac{r^{6}}{\rho^{Q+6}}.
%\end{align*}
%\begin{align*}
%&\frac{\partial}{\partial t}(\frac{1}{\rho^{Q-2}})=(2-Q)\frac{t }{2\rho^{Q+2}},\\
%&\frac{\partial^{2}}{\partial t^{2}}(\frac{1}{\rho^{Q-2}})=(2-Q)\frac{1 }{2\rho^{Q+2}}-(2-Q)(Q+2)\frac{t^{2} }{4\rho^{Q+6}},\\
%&4r^{2}\frac{\partial^{2}}{\partial t^{2}}(\frac{1}{\rho^{Q-2}})
%=(2-Q)\frac{2r^{6}+2r^{2}t^{2} }{\rho^{Q+6}}-(2-Q)(Q+2)\frac{t^{2} r^{2}}{\rho^{Q+6}}.
%\end{align*}
%Then by using the fact
%\begin{align*}
%[-3(Q-2)+(Q-2)(Q-2+n)r^{2}-2n(Q-2)-2(Q-2)]\rho^{-(Q-2)-4}.
%\\
%-(Q-2)(Q-2+n)r^{2}\rho^{-(Q-2)-8}
%\end{align*}
%we can deduce the desired result.

\section{CR inversion}
\setcounter{equation}{0}
%%%%%%%%%%%%%%%%%%%%%%%%%%%%%%%%%%%%%%%%%%%%%%%%%%%%%%%%%%%%%%%%%%%%%%%%%%%%%%%%%%%%%%%

Following \cite{BP} and \cite{JL}, we define the CR inversion in the half-space $\widehat{\bH}^{n}_{+}=\CH\times \R^+$.

For any $(x,y,t,\lambda)\in \widehat{\bH}^{n}_{+}$, let as before $r=(|x|^2+|y|^2+\lambda^2)^{\frac{1}{2}}$ and $\rho=(r^4+t^2)^{\frac{1}{4}}$. We set
\begin{align*}
\widetilde x_i=\frac{x_i t+y_i r^{2}}{\rho^4},\;\;\widetilde y_i=\frac{y_i t-x_i r^{2}}{\rho^4},\;\;\widetilde t=-\frac{t}{\rho^4},\;\;\widetilde \lambda=\frac{\lambda}{\rho^2}.
\end{align*}
The CR inversion of a function $U$ defined on $\widehat{\bH}^{n}_{+}$, is given by
$$v(x,y,t,\lambda)=\frac{1}{\rho^{Q-1}} U(\tilde{x},\tilde{y},\tilde{t},\tilde{\lambda}).$$
The following lemma shows which equation is satisfied by the CR inversion of a solution of problem \eqref{eq-ext}.
\begin{Lem}\label{CR-eq}
Suppose that $U\in C^2(\widehat{\bH}^{n}_{+})\cap C(\overline{\widehat{\bH}^{n}_{+}})$ is a solution of \eqref{eq-ext}. Then the CR inversion $v$ of $U$ satisfies
\begin{equation}
\begin{cases}
\mathcal L v=0 &\mbox{in}\;\;\widehat{\bH}^{n}_{+} \{0\},\\
\displaystyle -\frac{\partial v}{\partial \lambda}= \rho^{p(Q-1)-(Q+1)} v^p &\mbox{on}\;\;\CH\times \{\lambda=0\}\setminus \{0\}.
\end{cases}
\end{equation}
\end{Lem}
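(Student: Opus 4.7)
The plan splits naturally into (i) checking the Neumann condition on $\lambda=0$, which is a short direct computation, and (ii) checking the interior equation $\mathcal L v=0$, which is the heart of the lemma and is where the conformal-covariance structure of $\mathcal L$ comes in. I would do (i) first to warm up the chain rule and fix signs, then turn to (ii).

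For (i), the key observation is that $\partial\rho/\partial\lambda = r^{2}\lambda/\rho^{3}$ vanishes on $\{\lambda=0\}$, and an inspection of the explicit formulas for $\tilde x_i,\tilde y_i,\tilde t$ shows that each of $\partial\tilde x_i/\partial\lambda$, $\partial\tilde y_i/\partial\lambda$, $\partial\tilde t/\partial\lambda$ is a sum of terms proportional either to $\lambda$ or to $\partial\rho/\partial\lambda$; hence all of them vanish at the boundary. The only surviving derivative is $\partial\tilde\lambda/\partial\lambda|_{\lambda=0} = \rho^{-2}$. Applying the chain rule to $v(z)=\rho^{-(Q-1)}U(\tilde x,\tilde y,\tilde t,\tilde\lambda)$ and invoking the Neumann condition on $U$ gives
\[
-\frac{\partial v}{\partial\lambda}\bigg|_{\lambda=0} = \rho^{-(Q-1)}\rho^{-2}\,U^{p}(\Phi(z)) = \rho^{-(Q+1)}\bigl(\rho^{Q-1}v(z)\bigr)^{p} = \rho^{\,p(Q-1)-(Q+1)}\,v^{p},
\]
which is the desired boundary condition.

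For (ii), the target is the pointwise identity
\[
\mathcal L v(z) = \rho^{-(Q+3)}\,(\mathcal L U)(\Phi(z)),
\]
which immediately yields $\mathcal L v=0$ on $\widehat{\bH}^{n}_{+}\setminus\{0\}$ from $\mathcal L U=0$. To establish it I would decompose $v=\psi(\rho)\cdot(U\circ\Phi)$ with $\psi(\rho)=\rho^{-(Q-1)}$ and expand
\[
\mathcal L v = (\mathcal L\psi)(U\circ\Phi) + 2\,(A\nabla\psi)\cdot\nabla(U\circ\Phi) + \psi\,\mathcal L(U\circ\Phi),
\]
where $A$ is the matrix representing $\mathcal L=\mathrm{div}(A\nabla^{T})$. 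By Lemma \ref{foundamental} the first term vanishes; it then remains to compute $\mathcal L(U\circ\Phi)$ through the chain rule with the Jacobian $D\Phi$ and verify that the cross term $2(A\nabla\psi)\cdot\nabla(U\circ\Phi)$ exactly cancels the lower-order derivatives of $U$ generated by $\psi\,\mathcal L(U\circ\Phi)$, leaving only the leading-order term $\rho^{-(Q+3)}(\mathcal L U)(\Phi(z))$.

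The main obstacle is this last cancellation: it requires showing that the pullback $D\Phi\,A(\Phi(z))\,(D\Phi)^{T}$ reproduces the same degenerate matrix $A(z)$ up to the scalar factor, a statement that reflects the CR conformal invariance of $\mathcal L$ under CR inversion. The computation is manageable but bookkeeping-heavy because of the degeneracy of $A$ and the mixing of the $t$-derivative with the $(x,y,\lambda)$-derivatives in $\Phi$. A conceptual shortcut worth recording is to invoke the CR covariance of $\mathcal P_{1/2}$ discussed in Section 2 together with the uniqueness of the extension in Theorem \ref{thm-extension}: since $\Phi|_{\lambda=0}$ is Jerison--Lee CR inversion on $\CH$, the trace of $v$ on $\CH$ is the CR inverse of $u=U|_{\lambda=0}$, and the uniqueness of its extension identifies it with $v$, which therefore satisfies $\mathcal L v=0$.
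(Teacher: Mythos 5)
Your part (i) matches the paper exactly. For part (ii) your framing via the Leibniz rule for $\mathcal L=\operatorname{div}(A\nabla^{T})$, together with Lemma \ref{foundamental} to kill $(\mathcal L\psi)(U\circ\Phi)$ and a pullback identity $(D\Phi)^{T}A(z)(D\Phi)=\rho^{-4}A(\Phi(z))$ for the principal part, is a cleaner organization than the paper's, which simply expands $\partial^{2}v/\partial r^{2}$ and $\partial^{2}v/\partial t^{2}$ in cylindrical coordinates via the chain rule and collects the coefficients $a_{1},a_{2},a_{3},b_{1},b_{2},c$ in front of the derivatives of $U$. The two routes end in the same bookkeeping: the paper's verification that $a_{2}=0$, $a_{1}=\rho^{-(Q+3)}$, $a_{3}=\rho^{-(Q+3)}\cdot 4r^{2}/\rho^{4}$ is precisely your pullback identity, and the vanishing of $b_{2}$ plus $b_{1}=\rho^{-(Q+3)}(Q-2)/\tilde r$ is precisely the cancellation between the cross term $2(A\nabla\psi)\cdot\nabla(U\circ\Phi)$ and the Hessian-of-$\Phi$ contributions. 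Your framing has the advantage of making the conformal-covariance structure visible and of not requiring the restriction to cylindrical $U$, whereas the paper restricts to cylindrical solutions for brevity; but you have only described the computation, not carried it out, and that computation is the whole content of the lemma.

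The ``conceptual shortcut'' at the end does not work and you should drop it. Knowing that $v|_{\lambda=0}$ equals the Jerison--Lee CR inverse $\tilde u$ of $u$ on $\CH$ tells you only the boundary trace of $v$; it does not identify $v$ with the unique extension of $\tilde u$ furnished by Theorem \ref{thm-extension}, because there are infinitely many functions on $\widehat{\bH}^{n}_{+}$ with that same trace, and the extension is singled out precisely by the condition $\mathcal L V=0$ --- which is what you are trying to prove. You would additionally need to know that CR inversion in the half-space commutes with the extension operator $\mathcal E_{1/2}$, and that commutation statement is equivalent to the lemma, so the argument is circular. (There is also the secondary issue that $\tilde u$ is singular at the origin, so Theorem \ref{thm-extension} as stated for smooth data does not directly apply.) The direct computation, whether in your structured form or in the paper's coordinate form, is the way to go.
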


%\begin{Lem}
% For $\rho\neq 0$,
% \begin{align*}
%  {\mathcal{L}}[ \frac{1}{\rho^{Q-2}}U(\tilde{\lambda},\tilde{x},\tilde{y},\tilde{t})]=
%  \frac{1}{\rho^{Q+2}}   {\mathcal{L}} U(\tilde{\lambda},\tilde{x},\tilde{y},\tilde{t}).
%  \end{align*}
%\end{Lem}
%
\begin{proof}
Since it is just a long computation, we will give the details of the proof for cylindrical solutions $U(r,t)$, but the statement holds true for any solution.

It is clear that
\begin{align*}
\tilde{r}^{2}=|\tilde{x}|^{2}+|\tilde{y}|^{2}+\tilde{\lambda}^{2}=
\frac{t^{2}(|x|^{2}+|y|^{2}+\lambda^{2})}{\rho^{8}}+\frac{r^{4}(|x|^{2}+|y|^{2}+\lambda^{2})}{\rho^{8}}=
\frac{r^{2}}{\rho^{4}},
\end{align*}
and
$$\tilde \rho=(|\tilde{r}|^{4} + \tilde t^2)^{\frac{1}{2}}=\frac{1}{\rho}.$$
In the same way:
$$\tilde{r_0}^{2}=|\tilde{x}|^{2}+|\tilde{y}|^{2}=\frac{r_0^{2}}{\rho^{4}}.$$
Therefore, if $U$ depends only on $(r,t)$ (respectively on $(r_0,t,\lambda)$), then so does also $v$.
The following relations are useful
\begin{align*}
&\frac{\partial \tilde{r}}{\partial r}=\frac{t^{2}-r^{4}}{\rho^{6}},\; \frac{\partial \tilde{r}}{\partial t}
=\frac{-rt}{\rho^{6}},\\
&\frac{\partial \tilde{t}}{\partial r}=\frac{4tr^{3}}{\rho^{8}},\; \frac{\partial \tilde{t}}{\partial t}
=\frac{2t^{2}-t^{2}-r^{4}}{\rho^{8}}=\frac{t^{2}-r^{4}}{\rho^{8}}.
\end{align*}
Now we have
\begin{align*}
\frac{\partial  v}{\partial r}=\frac{(1-Q)r^{3}}{\rho^{Q+3}}U+\frac{1}{\rho^{Q-1}}
[\frac{t^{2}-r^{4}}{\rho^{6}}\frac{\partial  U}{\partial \tilde{r}}
+ \frac{4tr^{3}}{\rho^{8}}\frac{\partial  U}{\partial \tilde{t}}]
\end{align*}
and
\begin{align*}
\frac{\partial^{2} v}{\partial r^{2}}&=
\frac{\partial  }{\partial r}\big(\frac{(1-Q)r^{3}}{\rho^{Q+3}}\big) U
+\frac{2(1-Q)r^{3}}{\rho^{Q+3}}[(\frac{t^{2}-r^{4}}{\rho^{6}})\frac{\partial U}{\partial \tilde{r}}+
(\frac{4r^{3}t}{\rho^{8}})\frac{\partial U}{\partial \tilde{t}}]\\
&+\frac{1}{\rho^{Q-1}}[\frac{\partial  }{\partial r}(\frac{t^{2}-r^{4}}{\rho^{6}})\frac{\partial U}{\partial \tilde{r}}+\frac{\partial  }{\partial r}(\frac{4r^{3}t}{\rho^{8}})\frac{\partial U}{\partial \tilde{t}}]\\
&+
\frac{1}{\rho^{Q-1}}\Big[2(\frac{t^{2}-r^{4}}{\rho^{6}})^{2}\frac{\partial^{2} U}{\partial \tilde{r}^{2}}+
(\frac{t^{2}-r^{4}}{\rho^{6}})(\frac{4r^{3}t}{\rho^{8}})\frac{\partial^{2} U}{\partial \tilde{r}\partial \tilde{t}}+(\frac{4r^{3}t}{\rho^{8}})^{2}\frac{\partial U}{\partial \tilde{t}^{2}}\Big].
\end{align*}
The derivatives with respect to $t$ are given by
\begin{align*}
\frac{\partial  v}{\partial t}=\frac{(1-Q)t}{2\rho^{Q+3}}U+\frac{1}{\rho^{Q-1}}
[\frac{-rt}{\rho^{6}}\frac{\partial  U}{\partial \tilde{r}}
+ \frac{t^{2}-r^{4}}{\rho^{8}}\frac{\partial  U}{\partial \tilde{t}}]
\end{align*}
and
\begin{align*}
\frac{\partial^{2} v}{\partial t^{2}}&=\frac{\partial  }{\partial t}\big(\frac{(1-Q)t}{2\rho^{Q+3}}\big) U
+\frac{2(1-Q)t}{2\rho^{Q+3}}[(\frac{-rt}{\rho^{6}})\frac{\partial U}{\partial \tilde{t}}+
(\frac{t^{2}-r^{4}}{\rho^{8}})\frac{\partial U}{\partial \tilde{t}}]\\
&+\frac{1}{\rho^{Q-1}}[\frac{\partial  }{\partial t}(\frac{-rt}{\rho^{6}})\frac{\partial U}{\partial \tilde{r}}+\frac{\partial  }{\partial r}(\frac{t^{2}-r^{4}}{\rho^{8}})\frac{\partial U}{\partial \tilde{t}}]\\
&+
\frac{1}{\rho^{Q-1}}\Big[(\frac{-rt}{\rho^{6}})^{2}\frac{\partial^{2} U}{\partial \tilde{r}^{2}}+
(\frac{-rt}{\rho^{6}})(\frac{t^{2}-r^{4}}{\rho^{8}})\frac{\partial^{2} U}{\partial \tilde{r}\partial \tilde{t}}+(\frac{t^{2}-r^{4}}{\rho^{8}})^{2}\frac{\partial U}{\partial \tilde{t}^{2}}\Big]\\
&+\frac{1}{\rho^{Q-2}}[\frac{\partial }{ \partial t}2(\frac{-rt}{\rho^{6}})
\frac{\partial U}{ \partial \tilde{r}}+ \frac{\partial }{ \partial t}(\frac{t^{2}-r^{4}}{\rho^{6}})
\frac{\partial U}{ \partial \tilde{t}}].
\end{align*}

Let us denote
\begin{align*}
{\mathcal{L}}U=a_{1}\frac{\partial^{2}U}{\partial \tilde{r}^{2}}+a_{2} \frac{\partial^{2}U}{\partial \tilde{r}\partial \tilde{t}}+
a_{3}\frac{\partial^{2}U}{\partial \tilde{t}^{2}}+b_{1} \frac{\partial U}{\partial \tilde{r}}+
b_{2} \frac{\partial U}{\partial \tilde{t}} +cU.
\end{align*}
Then
\begin{align*}
&c=0,\\
&a_{1}=\frac{1}{\rho^{Q+11}}[(t^{2}-r^{4})^{2}+4r^{2}(rt)^{2}]=\frac{1}{\rho^{Q+3}},\\
&a_{2}=\frac{1}{\rho^{Q+13}}[2(t^{2}-r^{4})(4r^{3}t)-8r^{2}(rt)(t^{2}-r^{4})]=0,\\
&a_{3}=\frac{1}{\rho^{Q+15}}[16r^{6}t^{2}+4r^{2}(t^{2}-r^{4})^{2}]=\frac{1}{\rho^{Q+3}}\frac{4r^{2}}{\rho^{4}}.
\end{align*}
By using that
\begin{align*}
&\frac{\partial}{\partial
r}(\frac{t^{2}-r^{4}}{\rho^{6}})
=\frac{-4r^{3}(t^{2}+r^{4})}{\rho^{10}}-\frac{6(t^{2}-r^{4})r^{3}}{\rho^{10}},
\\
&\frac{\partial}{\partial
t}(\frac{-rt}{\rho^{6}})=-\frac{r(t^{2}+r^{4})}{\rho^{10}}+\frac{3rt^{2}}{\rho^{10}},
\end{align*}
we can see that
\begin{align*}
 b_{1}=&\frac{2(1-Q)r^{4}(t^{2}-r^{4})}{r\rho^{Q+9}}
+\frac{1}{\rho^{Q-1}}\frac{\partial}{\partial r}(\frac{t^{2}-r^{4}}{\rho^{6}})\\
&
+\frac{Q-2}{r}\frac{(t^{2}-r^{4})(t^{2}+r^{4})}{\rho^{Q+9}}+\frac{4r^{2}(1-Q)(-r^{2}t^{2})}{r\rho^{Q+9}}
+\frac{4r^{2}}{\rho^{Q-1}}
\frac{\partial}{\partial t}(\frac{-rt}{\rho^{6}})
\\
=&\frac{2(1-Q)r^{4}(t^{2}-r^{4})}{r\rho^{Q+9}}
+ (\frac{-4r^{4}(t^{2}+r^{4})}{r\rho^{Q+9}}-\frac{6(t^{2}-r^{4})r^{4}}{r\rho^{Q+9}})\\
&
+\frac{Q-2}{r}\frac{(t^{2}-r^{4})(t^{2}+r^{4})}{\rho^{Q+9}}+\frac{4(1-Q)(-r^{4}t^{2})}{r\rho^{Q+9}}\\
&-\frac{4r^{4}(t^{2}+r^{4})}{r\rho^{Q+9}}+\frac{12r^{4}t^{2}}{r\rho^{Q+9}}
\\
=&\frac{1}{\rho^{Q+3}}\frac{(Q-2)\rho^{2}}{r}=\frac{1}{\rho^{Q+3}}\frac{(Q-2)}{\tilde{r}}.
\end{align*}

We have that  \begin{align*}
&\frac{\partial}{\partial
r}(\frac{4r^{3}t}{\rho^{8}})=
\frac{12r^{2}t(t^{2}+r^{4})}{\rho^{12}}-\frac{32r^{6}t}{\rho^{12}},\\
&\frac{\partial}{\partial t}(\frac{ t^{2}-r^{4}}{\rho^{8}})=
\frac{2t(t^{2}+r^{4})}{\rho^{12}}-\frac{4t(t^{2}-r^{4})}{\rho^{12}}.
\end{align*}
Then we see that the following term vanishes, that is,
\begin{align*}
b_{2}=&\frac{8(1-Q)r^{6}t}{\rho^{Q+11}}+\frac{1}{\rho^{Q-1}}
\frac{\partial}{\partial r}(\frac{4r^{3}t}{\rho^{8}})
+ \frac{4(Q-2)r^{3}t(t^{2}+r^{4})}{r\rho^{Q+11}}\\
&+\frac{4(1-Q)r^{2}t(t^{2}-r^{4})}{\rho^{Q+11}}
+\frac{4r^{2}}{\rho^{Q-1}}\frac{\partial}{\partial t}(\frac{ t^{2}-r^{4}}{\rho^{8}})\\
=&\frac{8(1-Q)r^{6}t}{\rho^{Q+11}}+
(\frac{12r^{2}t(t^{2}+r^{4})}{\rho^{Q+11}}-\frac{32r^{6}t}{\rho^{Q+11}})
+ \frac{4(Q-2)r^{2}t(t^{2}+r^{4})}{\rho^{Q+11}}\\
&+\frac{4(1-Q)r^{2}t(t^{2}-r^{4})}{\rho^{Q+11}}
+ (\frac{8r^{2}t(t^{2}+r^{4})}{\rho^{Q+11}}-\frac{16  r^{2}t(t^{2}-r^{4})}{\rho^{Q+11}})=0.
\end{align*}
This shows that $\mathcal L v(x,y,t,\lambda)=\frac{1}{\rho^{(Q+3)}}\mathcal L U(\widetilde x,\widetilde y,\widetilde t,\widetilde \lambda)=0$. Finally, the Neumann data becomes:
\begin{align*}
-\lim_{\lambda \rightarrow 0} \frac{\partial v}{\partial \lambda}(x,y,t,\lambda)=&-\lim_{\widetilde \lambda \rightarrow 0}\frac{1}{\rho^{Q-1}}\rho^{-2} \frac{\partial U}{\partial \widetilde \lambda}(\widetilde x,\widetilde y,\widetilde t,\widetilde \lambda)\\
 =&\frac{1}{\rho^{(Q+1)}} U^p(\widetilde x,\widetilde y,\widetilde t,0)=\rho^{p(Q-1)-(Q+1)} v(x,y,t,0).
\end{align*}
\end{proof}

\section{Maximum principle and Hopf's Lemma}
\setcounter{equation}{0}
%%%%%%%%%%%%%%%%%%%%%%%%%%%%%%%%%%%%%%%%%%%%%%%%%%%%%%%%%%%%%%%%%%%%%%%%%%%%%%%%%%%%%%%

Basic tools in the method of moving planes are the maximum principle and Hopf's Lemma.
We start by recalling the classical maximum principle for H\"ormander-type operators due to Bony \cite{Bony}.

\medskip
\begin{Prop}(\cite{Bony}) \label{prop-max0}
Let $\dD$ be a bounded domain in ${\widehat{\bH}}^{n}$,  let $Z$ be
a smooth vector field on $\dD$ and let $a$ be a smooth nonnegative function. Assume that $U\in  C^{2}(\dD)\cap C^{1}(\overline{\dD})$ is a solution of
\begin{equation}\label{MP0}
\begin{cases}
-\mathcal{L}U + Z(z)U+ a(z) U\ge 0 & \mbox{in}\;\;\dD,\\
U\ge 0 &\mbox{on} \;\; \partial\dD.\\
\end{cases}
\end{equation}
Then $U\geq 0$ in $\dD$.
\end{Prop}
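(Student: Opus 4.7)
The plan is a standard weak-maximum-principle argument, adapted to the degenerate operator $\mathcal L$ through a barrier perturbation. Suppose, for contradiction, that $U(z^*)<0$ for some $z^*\in\dD$. The main difficulty is that the differential inequality in \eqref{MP0} is non-strict, so the interior-minimum argument must be applied not to $U$ itself but to a strict super-solution. To this end I would construct $\phi\in C^\infty(\overline{\dD})$ with $\phi\ge 0$ on $\overline{\dD}$ and
\[
-\mathcal L\phi + Z\phi + a\phi \ge \delta>0 \quad \text{on } \overline{\dD},
\]
for some $\delta>0$. Exploiting that $\mathcal L$ is non-degenerate in the $\partial_\lambda$-direction (the $(2n+2,2n+2)$-entry of the matrix $A$ in the representation $\mathcal L=\operatorname{div}(A\nabla^T)$ equals $1$ and all cross-coefficients with $\partial_\lambda$ vanish), a convenient choice is
\[
\phi(z):=e^{\gamma\Lambda}-e^{\gamma\lambda},\qquad \Lambda:=\sup_{\overline{\dD}}\lambda+1.
\]
Only the $\partial_\lambda^2$-derivative contributes in $\mathcal L\phi$, yielding $\mathcal L\phi=-\gamma^2 e^{\gamma\lambda}$, so
\[
-\mathcal L\phi+Z\phi+a\phi = e^{\gamma\lambda}\bigl(\gamma^2-\gamma Z^{(\lambda)}-a\bigr)+a\,e^{\gamma\Lambda},
\]
where $Z^{(\lambda)}$ denotes the $\partial_\lambda$-component of $Z$. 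Since $Z^{(\lambda)},a\in L^\infty(\overline{\dD})$ and $\lambda\ge 0$ on $\overline{\dD}$, choosing $\gamma$ large enough makes this expression bounded below by a positive constant $\delta$.

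Once $\phi$ is available, set $V_\epsilon:=U+\epsilon\phi$ for $\epsilon>0$. Then $V_\epsilon\ge 0$ on $\partial\dD$ and
\[
-\mathcal L V_\epsilon + Z V_\epsilon + a V_\epsilon \ge \epsilon\delta > 0 \quad\text{in }\dD.
\]
If $V_\epsilon$ were negative somewhere, continuity on the compact set $\overline{\dD}$ together with $V_\epsilon\ge 0$ on $\partial\dD$ would give a negative minimum at an interior point $z_0\in\dD$, at which $\nabla V_\epsilon(z_0)=0$ and $D^2V_\epsilon(z_0)\succeq 0$. Writing $\mathcal L V_\epsilon = \operatorname{tr}(A\,D^2V_\epsilon)+(\operatorname{div}A)\cdot\nabla V_\epsilon$, the first-order term vanishes at $z_0$ and $\operatorname{tr}\bigl(A(z_0)\,D^2V_\epsilon(z_0)\bigr)\ge 0$, because the product of two symmetric positive semidefinite matrices has nonnegative trace; combined with $Z(z_0)V_\epsilon(z_0)=0$ and $a(z_0)V_\epsilon(z_0)\le 0$ (using $a\ge 0$ and $V_\epsilon(z_0)<0$), one gets
\[
-\mathcal L V_\epsilon(z_0)+Z(z_0)V_\epsilon(z_0)+a(z_0)V_\epsilon(z_0)\le 0,
\]
contradicting the strict lower bound $\epsilon\delta>0$. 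Hence $V_\epsilon\ge 0$ on $\overline{\dD}$, i.e.\ $U\ge -\epsilon\phi\ge -\epsilon\|\phi\|_{L^\infty(\dD)}$, and letting $\epsilon\to 0^+$ gives $U\ge 0$ in $\dD$.

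The only non-routine step is the barrier construction: one must produce a strict super-solution $\phi$ in the presence of the degeneracy of $\mathcal L$ and absorb the drift $Z$ and zeroth-order term $a$ uniformly on $\overline{\dD}$. This is handled by working in the single non-degenerate direction $\partial_\lambda$ of the extended operator and choosing a sufficiently convex exponential profile; once $\phi$ is in hand, the remainder is the classical pointwise computation at an interior minimum.
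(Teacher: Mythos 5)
Your argument is correct, but it is a genuinely different route from what the paper does: the paper offers no proof at all for Proposition~\ref{prop-max0}, it simply invokes Bony's maximum principle \cite{Bony}, whose proof proceeds via propagation of maxima along the integral curves of the H\"ormander vector fields generating the operator and applies to fully degenerate operators like $\Delta_\bH$ itself. You instead give a self-contained weak-maximum-principle argument that crucially exploits a feature of the \emph{extended} operator $\mathcal L$, namely that the $\partial_\lambda^2$ block has coefficient $a_{(2n+2)(2n+2)}\equiv 1$ with no $\lambda$-cross terms, so that the operator is uniformly elliptic in the single direction $\partial_\lambda$; this lets you build a strict supersolution $\phi(\lambda)=e^{\gamma\Lambda}-e^{\gamma\lambda}$, perturb $U$ to $V_\varepsilon=U+\varepsilon\phi$, and close with the standard interior-minimum computation $\nabla V_\varepsilon(z_0)=0$, $D^2V_\varepsilon(z_0)\succeq 0$, $\operatorname{tr}(A(z_0)D^2V_\varepsilon(z_0))\geq 0$ (the trace of a product of two symmetric PSD matrices is nonnegative). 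The trade-off is clear: Bony's theorem is far more general and covers operators with no uniformly elliptic direction, at the cost of invoking nontrivial machinery; your argument is elementary and transparent, but only works because the Caffarelli--Silvestre-type extension grafts a nondegenerate $\lambda$-direction onto the sub-Laplacian. Since every application of Proposition~\ref{prop-max0} in the paper is to $\mathcal L$ on the half-space (or, in Lemma~\ref{hopf}, to $\mathcal L+2\beta\partial_\lambda$, where your barrier absorbs the extra drift equally well), your elementary proof is fully adequate here and arguably preferable as it avoids an external citation.
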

We prove now two Hopf's Lemmas. The first one is Hopf's Lemma for the operator
$\mathcal L$ in a subset $\mathcal V$ of $\widehat{\bH}^{n}$. We first define the interior ball condition in this setting.

\begin{Def}
Let $\dD\subset \widehat{\bH}^{n}$. We say that $\dD$ satisfies the interior $d_{\widehat{\bH}}$-ball condition at $P \in \partial \dD$ if there exist a constant $R>0$ and a point $z_0 \in \dD$, such that the ball $\mathcal{B}(z_0,R)\subset \dD$ and $P \in \partial \mathcal{B}(z_0,R)$,
where $\mathcal{B}(z_{0},R)=\{z\in {\widehat{\bH}}^{n}\mid d_{\widehat{\bH}}(z,z_{0})<R\}$.
\end{Def}

\begin{Lem}\label{hopf0}
Let $\dD\subset  {\widehat{\bH}}^{n}$ satisfy the interior $d_{\widehat{\bH}}$-ball condition at the point $P_0\in \partial \dD$ and
let $U\in C^2(\dD)\cap C^1(\overline{\dD})$, be a solution of
\begin{equation}\label{eq-hopf0}
-\mathcal L U \geq c_{1}(z)U\; \text{in}\;\;\dD
\end{equation}
with $c_{1}\in L^\infty(\dD)$. Suppose that $U(z)>U(P_0)=0$ for every $z\in \dD$.

Then
\[
 \lim\limits_{s\rightarrow 0}\frac{U(P_{0})-U(P_{0}-s\nu)}{s}<0.
\]
where $\nu$ is the outer normal to $\partial \dD$ in $P_0$.
\end{Lem}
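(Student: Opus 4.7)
The strategy is the classical Hopf lemma argument adapted to the degenerate sub-elliptic operator $\mathcal{L}$: combine Bony's maximum principle (Proposition~\ref{prop-max0}) with an explicit barrier built from the gauge distance $d_{\widehat{\bH}}$. By the interior $d_{\widehat{\bH}}$-ball condition, fix $\mathcal{B}(z_0,R)\subset\dD$ with $P_0\in\partial \mathcal{B}(z_0,R)$, and work on the annulus $A := \mathcal{B}(z_0,R)\setminus\overline{\mathcal{B}(z_0,R/2)}$. Observe that $\mathcal{L}$ is invariant under Heisenberg translations in the $(x,y,t)$-variables (since $\Delta_\bH$ is left-invariant and $\partial_\lambda^2 + 4\lambda^2\partial_t^2$ does not involve $(x,y,t)$), so without loss of generality $z_0 = (0,0,0,\lambda_0)$.

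Introduce the barrier
$$h(z) := e^{-\alpha \rho(z)^2} - e^{-\alpha R^2}, \qquad \rho(z):=d_{\widehat{\bH}}(z,z_0),$$
with $\alpha>0$ a large parameter to be chosen. By construction $h = 0$ on $\partial \mathcal{B}(z_0,R)$, $h>0$ on $A$, and $\partial_\nu h(P_0)<0$ because $\rho$ is a smooth defining function for the ball near $P_0$. Using the standard identity $\mathcal{L}(e^f)=e^f(\mathcal{L}f + \langle A\nabla f,\nabla f\rangle)$, with $A$ the coefficient matrix of $\mathcal{L}$ displayed in Section~2, a direct computation yields
$$\mathcal{L}h = e^{-\alpha \rho^2}\bigl[(4\alpha^2 \rho^2 - 2\alpha)\langle A\nabla\rho,\nabla\rho\rangle - 2\alpha\,\rho\,\mathcal{L}\rho\bigr].$$
Granting that $\langle A\nabla\rho,\nabla\rho\rangle\geq\delta>0$ on $A$, the $\alpha^2$-term dominates and $\mathcal{L}h\geq \|c_1\|_\infty h$ on $A$ for $\alpha$ large enough.

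Since $U>0$ on the compact set $\partial \mathcal{B}(z_0,R/2)$, fix $\epsilon>0$ small so that $\epsilon h\leq U$ there; trivially $\epsilon h = 0\leq U$ on $\partial\mathcal{B}(z_0,R)$. The function $w := U-\epsilon h$ satisfies $w\geq 0$ on $\partial A$, and from \eqref{eq-hopf0}, the nonnegativity of $U$ in $A$, and the barrier inequality,
$$-\mathcal{L}w + \|c_1\|_\infty w \;\geq\; (c_1 + \|c_1\|_\infty) U + \epsilon(\mathcal{L}h - \|c_1\|_\infty h)\;\geq\; 0 \quad\text{in } A.$$
Proposition~\ref{prop-max0} (with $Z=0$ and $a=\|c_1\|_\infty\geq 0$) gives $w\geq 0$ in $A$. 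Since $w(P_0)=0$, the inner normal derivative of $w$ at $P_0$ is nonnegative, hence
$$\liminf_{s\to 0^+}\frac{U(P_0)-U(P_0 - s\nu)}{s}\;\leq\; \epsilon\,\partial_\nu h(P_0)\;<\;0.$$

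The main technical obstacle is the non-degeneracy estimate $\langle A\nabla\rho,\nabla\rho\rangle\geq\delta>0$ on $A$, the sub-Riemannian analogue of $|\nabla\rho|\geq\delta$. Since $\mathcal{L}$ is degenerate and the gauge has characteristic points, this cannot hold globally, and unlike the Euclidean case one cannot reduce further to the origin by a translation in $\lambda$ because $\mathcal{L}$ is not invariant under such translations (the coefficient $4\lambda^2$ depends on $\lambda$). The estimate must therefore be verified by direct calculation using the explicit form of $A$ from \eqref{oper-L} and the formula for $d_{\widehat{\bH}}(\cdot, z_0)$, exploiting that $\rho\geq R/2$ throughout $A$ keeps the evaluation points uniformly bounded away from $z_0$.
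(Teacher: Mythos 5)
You have correctly located the crux, but the path you propose around it is a dead end rather than a ``technical obstacle to be verified'': from \eqref{radial} one has $\langle A\nabla\rho,\nabla\rho\rangle = G(z_0^{-1}\circ z)$, with $G$ as in \eqref{G}, and this vanishes identically along the characteristic curve $\{(\hat x,\hat y,t,\hat\lambda):t\in\R\}$ through the center $z_0=(\hat x,\hat y,\hat t,\hat\lambda)$. That curve passes through $z_0$ and therefore crosses any annulus $R/2<\rho<R$, so no positive lower bound $\delta$ for $\langle A\nabla\rho,\nabla\rho\rangle$ exists. Where $G=0$ one has $\mathcal{L}h=0$ while $\|c_1\|_\infty h>0$ (unless $c_1\equiv 0$), so your barrier inequality $\mathcal{L}h\geq\|c_1\|_\infty h$ cannot hold. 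Staying uniformly away from $z_0$, as your last sentence suggests, does not help: the degeneracy lives on the whole $t$-axis through $z_0$, not at $z_0$ itself. Your scheme works only in the special case $c_1\equiv 0$, which is not the case needed here (the proof of Lemma \ref{hopf} applies Lemma \ref{hopf0} with a nontrivial zero-order term).

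The paper resolves this by keeping the same radial barrier $\phi=e^{-\alpha R^2}-e^{-\alpha\rho^2}$ but replacing $U$ by the auxiliary function $\psi=U\,e^{-K(x_1-\hat x_1)^2}$. This produces a first-order drift $Z=-4K(x_1-\hat x_1)X_1$, which is admissible in Bony's maximum principle (Proposition~\ref{prop-max0}). A direct computation (using $\mathcal{L}(Ug)=U\mathcal{L}g+g\mathcal{L}U+2\langle A\nabla U,\nabla g\rangle$ with $g=e^{-K(x_1-\hat x_1)^2}$) gives $-\mathcal{L}\psi-4K(x_1-\hat x_1)X_1\psi\geq 0$ for $2K\geq\|c_1\|_\infty$, with the zero-order term now absorbed unconditionally because $U\geq 0$. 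For the barrier, the extra drift contribution $8K\alpha(x_1-\hat x_1)\rho\,X_1\rho$ carries, by the explicit form of $X_1\rho$, the same quadratic factor $\hat r^2=|x-\hat x|^2+|y-\hat y|^2+(\lambda-\hat\lambda)^2$ that appears in $4\alpha^2\rho^2 G$; both sides vanish on the characteristic axis to the same order, and away from it the $\alpha^2$-term dominates for $\alpha$ large in terms of $K,R,Q$. Applying Bony to $\psi+\varepsilon\phi$ on the annulus and using that $\phi$ is strictly increasing in $\rho$ then yields the strict sign of the normal derivative for $\psi$, hence for $U$. Without this auxiliary exponential weight in a horizontal variable --- not just a modification of the radial barrier --- the argument cannot close.
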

\begin{Rem}\label{RM-hopf}
We observe that if the function $c_1$ in Lemma \ref{hopf0} is identically zero, then we can drop the assumption $U(P_0)=0$.
\end{Rem}
\begin{proof}
By assumption, there exist a point $z_0=(\hat x_1,...,\hat x_n,\hat y_1,...,\hat y_n,\hat t,\hat{\lambda})$ and a radius $R>0$ such that the
  ball $ \mathcal{B}(z_0,R)\subset \dD$ and $P_0\in \partial \mathcal{B}(z_0,R)$.

We consider the function
 \begin{align}
\psi=Ue^{-K(x_{1}-\hat{x}_{1})^{2}}, \;\text{for}\; K>0.
 \end{align}
An easy computation yields
 \begin{align*}
 \frac{\partial^{2}\psi}{\partial x_{1}^{2}}=e^{-K(x_{1}-\hat{x}_{1})^{2}}
[4K^{2}(x_{1}-\hat{x}_{1})^{2}U-2K U-4K(x_{1}-\hat{x}_{1})
\frac{\partial  U}{\partial x_{1}}+\frac{\partial^{2}U}{\partial x_{1}^{2}}],
\end{align*}
and
 \begin{align*}
 \frac{\partial^{2}\psi}{\partial x_{j}^{2}}&=e^{-K(x_{1}-\hat{x}_{1})^{2}}
 \frac{\partial^{2}U}{\partial x_{j}^{2}},\quad  \frac{\partial^{2}\psi}{\partial y_{k}^{2}}=e^{-K(x_{1}-\hat{x}_{1})^{2}}
  \frac{\partial^{2}U}{\partial y_{k}^{2}},\\
    \frac{\partial^{2}\psi}{\partial \lambda^{2}}
 &=e^{-K(x_{1}-\hat{x}_{1})^{2}}
  \frac{\partial^{2}U}{\partial \lambda^{2}},
 \end{align*}
 where $j=2,\cdots,n,k=1,\cdots,n.$

 Moreover,
 \begin{align*}
  &\frac{\partial^{2}\psi}{\partial x_{1} \partial t}=e^{-K(x_{1}-\hat{x}_{1})^{2}}
  [-2K(x_{1}-\hat{x}_{1})
 \frac{\partial  U}{\partial t}+\frac{\partial^{2}U}{\partial x_{1}\partial t}],\\
 &\frac{\partial^{2}\psi}{\partial x_{j} \partial t}=e^{-K(x_{1}-\hat{x}_{1})^{2}}
  \frac{\partial^{2}U}{\partial x_{j}\partial t}.
 \end{align*}
 Therefore, we have
\begin{align*}
{\mathcal{L}}\psi+4K(x_{1}-\hat{x}_{1})X_{1}\psi=e^{-K(x_{1}-\hat{x}_{1})^{2}}[-4K^{2}
(x_{1}-\hat{x}_{1})^{2}U+{\mathcal{L}}U-2K U]
\end{align*}
and hence for $K$ sufficiently large, we deduce
$$-{\mathcal{L}}\psi-4K(x_{1}-\hat{x}_{1})X_{1}\psi\geq 0.$$

We introduce now the function $\phi=e^{-\alpha R^{2}}-e^{-\alpha\rho^{2}}$, where $\rho=d_{\widehat{\bH}}(z,z_0)$,
and $0<\rho<R$.
Since $\phi$ depends only on the distance from $z_0$, and $\mathcal L$ and $X_1$ are invariant with respect to the group action in $\widehat{\bH}^{n}$, we can use formula \eqref{radial} where now $\rho=d_{\widehat{\bH}}(z_0^{-1}\circ z,0)$, and the factor $\frac{r^2}{\rho^2}$ is replaced by the function $G(z_0^{-1}\circ z)$, where
\begin{equation}\label{G}
G(x_1,...,x_n,y_1,...,y_n,t,\lambda):=\frac{\sum_{j=1}^n(x_j^2+y_j^2)+\lambda^2}{[(\sum_{j=1}^n(x_j^2+y_j^2)+\lambda^2)^2 + t^2]^{\frac{1}{2}}}.
\end{equation}

Choosing $\alpha$ sufficiently large, we have

\begin{align*}
&-\mathcal L\phi-4K(x_1-\hat{x_1})X_1\phi\\
&\hspace{1em}=\left[G(z_0^{-1}\circ z)(4\alpha^{2}\rho^{2}-2(Q+1)\alpha)\right.\\
&\hspace{1.5em}\left.
-8K\alpha(x_{1}-\hat{x}_{1})\rho X_{1}\rho\right]e^{-\alpha\rho^{2}}\ge0,
\end{align*}

\noindent
Let  ${\mathcal{A}}:= \mathcal{B}(z_0,R)\setminus \mathcal{B}(z_0,R_1)$ for $0<R_1<R$. For $\varepsilon$ small enough
 \[
 \psi(z)+\varepsilon \phi(z)\ge 0\quad \text{in}\; \partial{\mathcal{A}}:= \partial  \mathcal{B}(z_0,R)\cup \partial \mathcal{B}(z_0,R_1).
 \]
Then, by Proposition \ref{prop-max0} we obtain that $\psi(z)+\varepsilon
\phi(z)\ge 0$ in ${\mathcal{A}}$.

\noindent
Therefore, using that $\psi(P_0)=\phi(P_0)=0$, we deduce that for $s$ small
\begin{align*}
 \psi(P_{0})-\psi(P_{0}-s\nu)+\varepsilon(\phi(P_{0})- \phi(P_{0}-s\nu))\leq 0.
\end{align*}
Using that $\phi$ is strictly increasing in $\rho$, we deduce
\[
 \lim\limits_{s\rightarrow 0}\frac{\psi(P_{0})-\psi(P_{0}-s\nu)}{s}<0,
\]
%Moreover,
%$\partial_{\lambda}U(z_{0})\le 0$, $X_{i}U(z_{0})\le 0$, $Y_{i}U(z_{0})\le 0$ and at least one is
%exactly negative.
which, in turn, implies that
\[
 \lim\limits_{s\rightarrow 0}\frac{U(P_{0}-s\nu)-U(P_{0})}{s}<0.
\]
\end{proof}

\medskip

For any $\Omega \subset \bH^{n}$, we denote by $\mathcal C$ the infinite cylinder
\begin{align*}
\dC=\Omega \times (0,+\infty).
\end{align*}

Before proving our second Hopf's Lemma,
let us recall the notion of interior ball condition in the Heisenberg group.
\begin{Def}
Let $\Omega\subset \bH^{n}$. We say that $\Omega$ satisfies the interior Heisenberg ball condition at $\xi \in \partial \Omega$ if there exist a constant $R>0$ and a point $\xi_0 \in \Omega$, such that the Heisenberg ball $B_{\bH}(\xi_0,R)\subset \Omega$ and $\xi \in \partial B_{\bH}(\xi_0,R)$.
\end{Def}

\begin{Lem}\label{hopf}
Let $\Omega\subset \CH$ satisfy the interior Heisenberg ball condition at the point $P\in \partial \Omega$ and
let $U\in C^2(\dC)\cap C^1(\overline{\dC})$, be a nonnegative solution of
\begin{equation}\label{eq-hopf}
\begin{cases}
-\mathcal L U \geq c_{1}(z)U &\mbox{in}\;\;\dC,\\
-\partial_\lambda U\geq c_{2}(\xi)U& \mbox{on}\;\; \Omega,
\end{cases}
\end{equation}
with $c_{1},c_{2}\;\in L^\infty(\dC)$ and $c_{1}$ is nonnegative. Suppose that $U((P,0))=0$ and $U$ is not identically null.

Then
\begin{equation}\label{normal-der}
\partial_\nu U(P,0)<0,
\end{equation}
where $\nu$ is the outer normal to $\partial \Omega$ in $P$.
\end{Lem}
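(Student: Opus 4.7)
The plan is to adapt the barrier and exponential-weight argument of Lemma \ref{hopf0}, inserting a $\lambda$-dependent factor whose $\lambda$-derivative dominates the (possibly sign-changing) coefficient $c_2$, thereby handling the Neumann condition on $\{\lambda=0\}$. As a preliminary step, I would upgrade $U\ge 0$ to strict positivity on $\dC\cup(\Omega\times\{0\})$: since $c_1\ge 0$ and $U\ge 0$, one has $-\mathcal L U\ge 0$, so Bony's strong maximum principle (Proposition \ref{prop-max0}) forces $U>0$ in $\dC$ (otherwise $U\equiv 0$, contrary to hypothesis). If $U(\xi_*,0)=0$ at some interior $\xi_*\in\Omega$, the nonnegativity of $U$ together with the Neumann inequality $-\partial_\lambda U\ge c_2U=0$ force $\partial_\lambda U(\xi_*,0)=0$, while Lemma \ref{hopf0} applied in the $\widehat{\bH}$-ball $\mathcal B((\xi_*,\varepsilon),\varepsilon)\subset \dC$ (outer normal $-e_\lambda$ at $(\xi_*,0)$) produces $\partial_\lambda U(\xi_*,0)>0$: contradiction.

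Using the interior Heisenberg-ball condition, fix $B_{\bH}(\xi_0,R)\subset\Omega$ tangent to $\partial\Omega$ at $P$, write $\rho_0(\xi)=d_{\bH}(\xi,\xi_0)$, and work on the half-annular region $\dC'=\{R/2<\rho_0(\xi)<R,\ 0<\lambda<\delta\}$. Take the barrier
\[
\phi(\xi,\lambda)=\bigl(e^{-\alpha R^2}-e^{-\alpha\rho_0(\xi)^2}\bigr)(1+\gamma\lambda),
\]
with $\gamma>\|c_2\|_\infty$, $\alpha$ large, and $\delta$ small. Then $\phi\le 0$ on $\overline{\dC'}$, $\phi(P,0)=0$, and $\phi<0$ off the outer cap $\{\rho_0=R\}$. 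The radial form of $\Delta_{\bH}$ (analogous to \eqref{radial}) shows that $-\Delta_{\bH}(e^{-\alpha\rho_0^2})$ is of order $\alpha^2 e^{-\alpha\rho_0^2}$ on the annulus, so $-\mathcal L\phi>0$ in $\dC'$ for $\alpha$ large and $\delta$ small enough that $4\lambda^2\partial_t^2\phi$ is absorbed (a standard modification of $\phi$ handles the characteristic axis $\{r_0=0\}$). On $\{\lambda=0\}$,
\[
-\partial_\lambda\phi-c_2\phi=(\gamma+c_2)\bigl(e^{-\alpha\rho_0^2}-e^{-\alpha R^2}\bigr)>0
\]
strictly for $R/2<\rho_0<R$, as desired.

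Following the scheme of Lemma \ref{hopf0}, let $\hat x_1$ be the first coordinate of $\xi_0$ and set $\psi_U=Ue^{-K(x_1-\hat x_1)^2}$, $\Phi=\phi e^{-K(x_1-\hat x_1)^2}$. For $K>\|c_1\|_\infty/2$ and $\alpha$ possibly enlarged, the identity used in the proof of Lemma \ref{hopf0} yields $-\mathcal L\psi_U-4K(x_1-\hat x_1)X_1\psi_U\ge 0$ in $\dC'$ (using $U\ge 0$) and a strict analogue for $\Phi$, while the Neumann inequalities $-\partial_\lambda\psi_U\ge c_2\psi_U$ and $-\partial_\lambda\Phi>c_2\Phi$ on $\{\lambda=0\}$ survive the weight. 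By the positivity step there is $m_0>0$ with $U\ge m_0$ on the compact set $\{\rho_0=R/2\}\cup\{\lambda=\delta\}$ of $\partial\dC'$, so for $\epsilon>0$ sufficiently small $\psi_U+\epsilon\Phi\ge 0$ on $\partial\dC'\setminus\{\lambda=0\}$. A Bony-type maximum principle with mixed Dirichlet/Neumann data---proved by excluding negative interior minima via the strict elliptic inequality and negative minima on $\{\lambda=0\}$ by applying Lemma \ref{hopf0} to $\psi_U+\epsilon\Phi$ minus its minimum value in a small $\widehat{\bH}$-half-ball tangent to $\{\lambda=0\}$ at the minimizer, in conjunction with the strict Neumann inequality---then yields $\psi_U+\epsilon\Phi\ge 0$ throughout $\dC'$. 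Because $\psi_U$ and $\Phi$ both vanish at $(P,0)$ and $\partial_\nu\Phi(P,0)>0$ (with $\nu$ the outer normal to $\partial\Omega$ at $P$, since $\Phi<0$ just inside $\partial B_{\bH}(\xi_0,R)$ and vanishes on it), $\partial_\nu(\psi_U+\epsilon\Phi)(P,0)\le 0$ reduces---via $U(P,0)=0$ and hence $\partial_\nu\psi_U(P,0)=e^{-K(p_1-\hat x_1)^2}\partial_\nu U(P,0)$---to $\partial_\nu U(P,0)<0$, which is \eqref{normal-der}.

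The main obstacles are twofold: constructing $\phi$ so that $-\mathcal L\phi$ beats both the degenerate $4\lambda^2\partial_t^2$ term in $\mathcal L$ and the first-order perturbation coming from the horizontal weight, while at the same time the Neumann excess $-\partial_\lambda\phi-c_2\phi$ is strictly positive regardless of the sign of $c_2$ (with extra care at characteristic points of $\Delta_{\bH}$); and proving the Bony-type comparison with mixed Dirichlet/Neumann boundary data, whose delicate point is ruling out negative minima on the Neumann face by a tangent-ball application of Lemma \ref{hopf0} combined with the strict Neumann inequality inherited from $\phi$.
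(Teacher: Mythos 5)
Your overall strategy (barrier + tangent ball + Hopf-type exclusion at the Neumann face) is the right skeleton, and the positivity step and the final reduction from $\partial_\nu\Phi(P,0)>0$ to $\partial_\nu U(P,0)<0$ are fine. But there is a genuine gap in how you handle the coefficient $c_2$, precisely at the step where you ``exclude negative minima on $\{\lambda=0\}$.''

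Suppose $W:=\psi_U+\epsilon\Phi$ attains a negative minimum $m<0$ at $(\xi_*,0)$ on the Neumann face. Applying the Hopf argument to $W-m$ in a tangent $\widehat{\bH}$-ball gives $\partial_\lambda W(\xi_*,0)>0$, i.e. $-\partial_\lambda W(\xi_*,0)<0$. On the other hand, your ``strict Neumann inequality'' for $W$ is $-\partial_\lambda W(\xi_*,0)>c_2(\xi_*)\,W(\xi_*,0)=c_2(\xi_*)\,m$. These two facts are compatible whenever $c_2(\xi_*)\,m<0$, i.e.\ whenever $c_2(\xi_*)>0$. In that case no contradiction arises, so you have not ruled out a negative minimum on $\{\lambda=0\}$ at a point where $c_2$ happens to be positive. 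The $(1+\gamma\lambda)$ factor with $\gamma>\|c_2\|_\infty$ makes the barrier satisfy a strict Neumann inequality, but once you compare against $W$ at a \emph{negative} value, the sign of $c_2 W$ flips and your exclusion argument loses its teeth. Note that the desired inequality at the target point $(P,0)$ holds trivially because $U(P,0)=0$ there; the problem is only at the auxiliary negative minimizer, and that is exactly where the sign of $c_2$ matters.

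The paper sidesteps this cleanly: it first sets $v=e^{-\beta\lambda}U$ with $\beta$ large, which turns the Neumann data into $-\partial_\lambda v\ge(\beta+c_2)v\ge 0$, with no sign-ambiguous coefficient on the right (the coefficient has been absorbed, and since $v\ge 0$ one may simply use $-\partial_\lambda v\ge 0$). It also uses a barrier $\eta(z)=e^{-\alpha\rho^2}-e^{-\alpha R^2}$ built from the full extended gauge $\rho=d_{\widehat{\bH}}(z,z_0)$ with $z_0\in\Omega\times\{0\}$, which automatically has $\partial_\lambda\eta=0$ on $\{\lambda=0\}$. Thus at a boundary minimizer, Hopf gives $\partial_\lambda(v-\varepsilon\eta)>0$ while the Neumann condition gives $-\partial_\lambda(v-\varepsilon\eta)\ge 0$, an immediate contradiction with no case analysis on $c_2$. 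If you want to keep your cylindrical geometry and the $(1+\gamma\lambda)$ barrier, you still need to perform the exponential transformation in $\lambda$ first (or otherwise normalize the Neumann coefficient to be nonnegative); the barrier modification alone does not do the job.

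Two smaller remarks: the $e^{-K(x_1-\hat x_1)^2}$ weight you carry over from Lemma \ref{hopf0} is superfluous here, because the hypothesis $c_1\ge 0$ together with $U\ge 0$ already gives $-\mathcal{L}U\ge 0$; importing the weight needlessly introduces the drift $-4K(x_1-\hat x_1)X_1$ into the comparison function, which Lemma \ref{hopf0} as stated does not cover (though Bony handles it). Second, your barrier is based on the horizontal gauge $\rho_0=d_{\bH}(\xi,\xi_0)$, so strict superharmonicity $-\mathcal{L}\phi>0$ fails on the characteristic set $\{r_0=0\}$; you acknowledge this but do not resolve it. The paper's choice of $\rho=d_{\widehat{\bH}}(z,z_0)$ on a half-ball only needs the non-strict inequality $-\mathcal{L}\eta\le 0$, so it does not encounter this difficulty.
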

\begin{proof}
We follow the proof of Lemma 2.4 in \cite{Chipot}.
%Without loss of generality, we can assume that $\Omega$ intersects the positive $x_1$ half-axes and that $P=(b,0,...,0)$ for some $b>0$. We have to show that
%%
%\begin{equation}\label{final}
%\partial_{x_1} U(P,0)<0.
%\end{equation}
%

By the strong maximum principle and by Lemma \ref{hopf0}, we have that
\begin{equation}\label{strong}
U>0\quad \mbox{on}\;\;\dC\cup \Omega.
\end{equation}
Indeed, the strong maximum principle ensures that $U>0$ in $\dC$, moreover $U$ cannot vanish at a point in $\Omega$, otherwise at this point the Neumann condition would be violated by Lemma \ref{hopf0}.

We start by proving the lemma in the case $c_{1}(z)=c_{2}(\xi)\equiv 0$.

Since $\Omega$ satisfies the interior Heisenberg ball condition at $P$, there exist $z_0\in \Omega\times \{0\}$ and $R>0$, such that the ball $\mathcal{B}^+(z_0,R)$ is contained in the cylinder $\dC$ and $(\overline{\partial \dC\cap \{\lambda>0\}})\cap\partial \mathcal{B}^+(z_0,R)=\{(P,0)\}.$
%We may assume that $z_0=(a,0,\cdots,0)$ and
We consider the set
$${\mathcal{A}}=\left(\mathcal{B}^+(z_0,R)\setminus \overline{\mathcal{B}^+(z_0,R/2)}\right)\cap \{\lambda >0\}.$$
We observe that $\{(P,0)\}=\partial {\mathcal{A}}\cap \overline{\partial \dC\cap \{\lambda>0\}}$.

For $z\in {\mathcal{A}}$ we consider the function $\eta(z)=e^{-\alpha \rho^2}-e^{-\alpha R^2}$, where $\rho=d_{\widehat{\bH}}(z,z_0)$.
Writing $\mathcal L$ in radial coordinates as in \eqref{radial}, we have that
$$\mathcal L \eta (\rho)=G(z_0^{-1}\circ z)\left( 4\alpha^2\rho^2-2(Q+1)\alpha\right) e^{-\alpha \rho^2},$$
where $G$ is defined as in \eqref{G}.

Therefore, for $\alpha$ sufficiently large, we have that%
\begin{equation}\label{eta}
-\mathcal L \eta \leq 0.
\end{equation}
By \eqref{strong} we deduce that $U>0$ on $\partial \mathcal{B}^+(z_0,R/2)\cap \{\lambda \geq 0\}$.
 Hence, we may choose $\varepsilon>0$ such that
$$
U- \varepsilon \eta \geq 0\quad \mbox{on}\;\;\partial \mathcal{B}^+(z_0,R/2)\cap \{\lambda \geq 0\}.
$$
\textbf{Claim:} $U-\varepsilon \eta\geq 0$ in ${\mathcal{A}}$.

Indeed, using \eqref{eta}, we deduce that $-\mathcal L(U-\varepsilon \eta)\geq 0$
in ${\mathcal{A}}$. Hence,
by the maximum principle,
we have that the minimum of $U-\varepsilon \eta$
is attained only on $\partial {\mathcal{A}}$ (unless $U-\varepsilon \eta$ is constant). Now, on one side we have that
$$U-\varepsilon \eta \geq 0 \quad \mbox{on}\;\;\partial {\mathcal{A}}\cap \{\lambda>0\}.$$
On the other side, since $\partial_\lambda \eta =0$ on $\{\lambda=0\}$, we deduce that $-\partial_\lambda(U-\varepsilon \eta)\geq 0$ on $\partial \mathcal{A} \cap \{\lambda=0\}$.

\noindent
Thus, using Lemma \ref{hopf0}, we conclude that the minimum of $U-\varepsilon \eta$ cannot be achieved on $\left(\mathcal{B}^{+}(z_0,R)\setminus \overline{\mathcal{B}^+(z_0,R)/2)}\right)\cap \{\lambda=0\}$. This reaches the claim.

\noindent
Finally, since $(U-\varepsilon \eta)((P,0))=0$, we deduce that $\partial_{\nu}(U-\varepsilon \eta)((P,0))\leq 0$, which in turn implies that $\partial_{\nu} U((P,0))<0$ using that $\partial_{\nu}\eta((P,0)) <0$. This concludes the case $c_{1}(z)=c_{2}(\xi)\equiv 0$.

In the general case, we introduce the function $v=e^{-\beta \lambda} U$ and we compute
\begin{eqnarray*}
-\mathcal L v &=&-e^{-\beta \lambda}\mathcal L U +2\beta e^{-\beta \lambda}U_\lambda -\beta^2 v\\
&\geq& c_1(z)-\beta^2 v+ 2\beta e^{-\beta \lambda}(\beta e^{\beta\lambda} v + e^{\beta \lambda}v_\lambda)\\
&=&c_1(z)+\beta^2 v +2\beta v_\lambda.\end{eqnarray*}
Therefore, we have
$$-\mathcal L v-2\beta v_\lambda\geq c_1(z)+\beta^2 v\geq 0.$$
Moreover, for $\beta$ large enough
$$-\partial_\lambda v\geq (\beta+c_2(z))v\geq 0.$$
We can apply the first part of the proof to the function $v$, noting that the same argument works when the operator $\mathcal L$ is replaced by $\mathcal L +2\beta \partial_\lambda$.

\end{proof}

\section{Proof of Theorems \ref{main} and \ref{main2}}
\setcounter{equation}{0}
%%%%%%%%%%%%%%%%%%%%%%%%%%%%%%%%%%%%%%%%%%%%%%%%%%%%%%%%%%%%%%%%%%%%%%%%%%%%%%%%%%%%%%%

%
In this last section we give the proof of our Liouville-type result.

We consider a solution $U=U(|(x,y)|,t,\lambda)$ of
\begin{equation}\label{eqn-d}
\begin{cases}
{\mathcal{L}}U=0 &  \text{in}\;  \widehat{\bH}^{n}_{+}:=\CH\times \R^+, \\
   -\partial_{\lambda} U=U^{p} & \text{on}\;  \CH=\partial \widehat{\bH}^{n}_{+}, \\
U>0 &\text{in}\;  \widehat{\bH}^{n}_{+}.
\end{cases}
\end{equation}

First of all, we perform the CR inversion of $U$.
For $z=(x,y,t,\lambda)\in \widehat{\bH}^{n}$, let
\[
w(z)=\frac{1}{\rho^{Q-1}}U({\widetilde z}),
\]
where $\widetilde z=\frac{1}{\rho^{4}}\left(xt+yr^2,yt-xr^2,-t,\lambda \rho^2\right)$,
 $r=\big(\sum_{j=1}^n (x_j^2+y_j^2)+\lambda^2\big)^{\frac{1}{2}} $ and $\rho(z)=(r^{4}+t^2)^{\frac{1}{4}}=
 d_{\widehat{\bH}}(z,0)$.
We have seen in Lemma \ref{CR-eq} that $w$ satisfies
\begin{equation}\label{eqn-depend2}
\begin{cases}
{\mathcal{L}}w=0 &  \text{in}\; \widehat{\bH}^{n}_{+} \setminus \{0\}, \\
   -\partial_{\lambda} w=\rho^{p(Q-1)-(Q+1)} w^{p} & \text{on}\;  \CH\times \{\lambda=0\}\setminus \{0\}.
\end{cases}
\end{equation}
Observe that the function  $w$ could be singular at the origin and it satisfies $\lim_{\rho\rightarrow \infty} \rho^{Q-1} w(z)=U(0)$,
hence
\begin{equation}\label{tildeC}
0<w(z)\leq \frac{\widetilde C}{\rho(z)^{Q-1}},\quad \mbox{for}\:\:\rho(z)\geq 1,
\end{equation}
for some positive constant $\widetilde C$.
We start now applying the moving plane method.
We will move a hyperplane orthogonal to the $t$-direction and use the $H$-reflection.
More precisely, for any $\mu\leq 0$, let

$T_{\mu}=\{z\in \widehat{\bH}^{n}_{+}\mid
t=\mu\}$, and $\Sigma_{\mu}=\{z\in \widehat{\bH}^{n}_{+}\mid t<\mu\}$.
For $z\in \Sigma_{\mu}$, we define
$z_{\mu}=(y,x,2\mu-t, \lambda)$. To avoid the singular point,
we consider
\[
\widetilde{\Sigma}_{\mu}=\Sigma_{\mu}\setminus\{e_{\mu}\},
\]
where $e_\mu=(0,0,2\mu,0)$ is the reflection of the origin.
We recall that, as shown in the proof of Lemma \ref{CR-eq}, if $U$ depends only on $(r_0,t,\lambda)$, then so does $w$.

Let now
\begin{eqnarray*}
w_\mu(z)&=&w_\mu(|(x,y)|,t,\lambda):=w(|(x,y)|,2\mu-t,\lambda)\\
&=&w(y,x,2\mu-t,\lambda)=w(z_{\mu}),
\end{eqnarray*}
and
\[
W_{\mu}(z):=w_{\mu}(z)-w(z)=w(z_{\mu})-w(z),\quad z\in \Sigma_{\mu}.
\]
%.
By using the invariance of the operator under the CR transform as in Lemma \ref{CR-eq} and
the fact that $\rho(z_{\mu})\le \rho(z)$, we deduce that
\[
\begin{cases}
{\mathcal{L}}  W_{\mu} =0 &  \text{in}\; \widehat{\bH}^{n}_{+}\setminus \{0,e_\mu\}, \\
 -\partial_{\lambda}W_{\mu}\ge c(z,\mu)W_{\mu} &  \text{on}\; \CH\setminus \{0,e_\mu\},
\end{cases}
\]
where $c(z,\mu)=\frac{p\Psi_{\mu}^{p-1}}{\rho^{(Q+1)-p(Q-1)}}$ and $\Psi_{\mu}(z)$ is between $w(z)$ and $w_{\mu}(z)$.
By the definition of $w_{\mu}$ and $w$, we have that $c(z,\mu)\approx C/\rho^{2}$ at infinity.

\medskip

We now define the function:
$$h_{0}=\rho(z+\beta_{2}e_{2n+2})^{-\beta_{1}},$$
where $e_{2n+2}=(0, \cdots, 0, 1)$, $\beta_2>0$ and
\begin{equation}\label{beta}
0<\beta_{1}<Q-1,\quad p\widetilde C^{p-1}<\beta_1\beta_2,
\end{equation}
where $\widetilde C$ is the constant in \eqref{tildeC}.

We consider
$$\widetilde{W}_{\mu}=W_\mu/h_{0}\quad \mbox{in} \;\;\Sigma_{\mu}.$$

The following lemma will let us start moving the hyperplane $T_{\mu}$ from $-\infty$.

\begin{Prop}\label{Prop-incia-mp}
 Assume that  $w\in C^{2}(\widehat{\bH}^{n}_{+})\cap C^{1}(\overline{\widehat{\bH}^{n}_{+}})\setminus \{0\}$
 satisfies \eqref{eqn-depend2}.
Then
{\rm(i)} For $\mu<0$ with $|\mu|$ large enough, if $\inf_{\Sigma_\mu} \widetilde{W}_\mu <0$, then the infimum is attained at some point   $z_0 \in \overline{\Sigma_\mu}\setminus\{e_\mu\}$.
{\rm(ii)} For any $\mu_0$, there exists an $R_1>0$ such that whenever $\inf_{\Sigma_\mu} \widetilde{W}_\mu$  is attained at $z_0 \in \overline{\Sigma_\mu}\setminus\{e_\mu\}$ with $\widetilde{W}_\mu(z_0) <0$ and $\mu \leq \mu_0$, then $\rho(z_0)=d_{\widehat{\bH}^n}(z_{0},0)\leq R_1$.

\end{Prop}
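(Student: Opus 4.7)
The plan is to analyze the boundary behavior of $\widetilde W_\mu = W_\mu/h_0$ on the three pieces of $\partial\widetilde\Sigma_\mu$ (the plane $T_\mu$, the excluded singular point $e_\mu$, and spatial infinity) in order to prove (i), and then to combine the PDE and Neumann condition inherited by $\widetilde W_\mu$ with sign properties of $h_0$ for (ii).

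For part (i), I would examine $\widetilde W_\mu$ on each piece of the boundary. On $T_\mu$, cylindrical symmetry of $w$ yields $w(z_\mu) = w(z)$, so $W_\mu \equiv 0$ and hence $\widetilde W_\mu \equiv 0$ there. As $z \to e_\mu$, the reflected point $z_\mu \to 0$ (the singular point of $w$), while $h_0(z)$ stays bounded and positive, so $\widetilde W_\mu(z) \to +\infty$. For $z \in \Sigma_\mu$ with $\rho(z) \to \infty$, either $r \to \infty$ or $|t| \to \infty$, and in both cases $\rho(z_\mu) = (r^4 + (2\mu - t)^2)^{1/4}$ diverges with $\rho(z_\mu)/\rho(z) \to 1$; choosing $|\mu|$ large enough ensures $\rho(z) \geq 1$ throughout $\Sigma_\mu$, so \eqref{tildeC} applies and gives $|W_\mu(z)| = O(\rho(z)^{-(Q-1)})$. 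Combining with $h_0(z) = \tilde\rho(z)^{-\beta_1} \sim \rho(z)^{-\beta_1}$ and $\beta_1 < Q-1$ yields $\widetilde W_\mu(z) \to 0$ at infinity. A strictly negative infimum can therefore not be approached on $T_\mu$, at $e_\mu$, or at infinity, and must be attained at some finite point in $\overline{\Sigma_\mu}\setminus\{e_\mu\}$.

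For part (ii), suppose the infimum is attained at $z_0$ with $\widetilde W_\mu(z_0)<0$, and split into two cases. If $z_0$ is interior ($\lambda(z_0)>0$), then $\nabla\widetilde W_\mu(z_0) = 0$ and $\mathcal L\widetilde W_\mu(z_0) \geq 0$ (since the matrix $A$ in $\mathcal L = \mathrm{div}(A\nabla\cdot)$ is PSD and $D^2\widetilde W_\mu(z_0)$ is PSD at the minimum). Expanding $\mathcal L(h_0\widetilde W_\mu) = \mathcal L W_\mu = 0$ by the Leibniz rule at the critical point gives
$$\widetilde W_\mu(z_0)\,\mathcal L h_0(z_0) = -h_0(z_0)\,\mathcal L\widetilde W_\mu(z_0) \leq 0,$$
hence $\mathcal L h_0(z_0) \geq 0$. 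A direct computation using the radial formula \eqref{radial} applied to the shifted weight $\tilde\rho^{-\beta_1}$, together with the correction coming from the fact that $\mathcal L$ is not translation-invariant in $\lambda$, shows that the leading order of $\mathcal L h_0$ is strictly negative for $\rho$ large (using $\beta_1 < Q-1$), producing a contradiction unless $\rho(z_0) \leq R_1^{(A)}$. If instead $z_0 \in \{\lambda = 0\}$, then $\partial_\lambda\widetilde W_\mu(z_0) \geq 0$, and rewriting the Neumann condition of \eqref{eqn-depend2} for $\widetilde W_\mu$ gives
$$0 \geq -\partial_\lambda \widetilde W_\mu(z_0) \geq \widetilde W_\mu(z_0)\Bigl(c(z_0,\mu) + \tfrac{\partial_\lambda h_0(z_0)}{h_0(z_0)}\Bigr),$$
so $c(z_0,\mu) \geq -\partial_\lambda h_0(z_0)/h_0(z_0) = \beta_1\beta_2(r_0^2+\beta_2^2)/\tilde\rho(z_0)^4$. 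Combined with the bound $c(z_0,\mu) \leq p\widetilde C^{p-1}/\rho(z_0)^2$ (which follows from $\Psi_\mu(z_0) \leq w(z_0) \leq \widetilde C/\rho(z_0)^{Q-1}$, valid because $w_\mu(z_0)<w(z_0)$ at the negative infimum), the strict gap $p\widetilde C^{p-1} < \beta_1\beta_2$ from \eqref{beta} forces $\rho(z_0) \leq R_1^{(B)}$. Taking $R_1 = \max(R_1^{(A)}, R_1^{(B)})$ concludes the proof.

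The hardest step is the regime analysis behind the two sign conditions $\mathcal L h_0 < 0$ and $-\partial_\lambda h_0/h_0 > c(\,\cdot\,,\mu)$ at infinity: because of the anisotropy of $\widehat{\bH}^n$, the directions in which $r$ dominates $t$ and vice versa must be handled separately, and the strict inequality $\beta_1\beta_2 > p\widetilde C^{p-1}$ built into \eqref{beta} is precisely the margin that permits closing the Neumann comparison in the dominant regime $r_0 \sim \rho(z_0)$.
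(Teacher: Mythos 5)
Your overall scheme is the same as the paper's for part (ii) but diverges from it in part (i), and that divergence is where the gap lies.

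For part (i) the paper does not argue that $\widetilde W_\mu\to+\infty$ at $e_\mu$. You claim this because $z_\mu\to 0$, ``the singular point of $w$,'' but nothing proves that $w$ blows up at the origin. One only knows $w>0$ and $w(z)=\rho(z)^{-(Q-1)}U(\widetilde z)$; if $U$ were to decay like $\widetilde\rho^{-(Q-1)}$ at infinity (which cannot be excluded a priori, since the whole point is to prove $U\equiv 0$), $w$ would stay bounded near $0$. What the argument actually needs is a \emph{uniform positive lower bound} on $w$ near $0$, and this is precisely what the paper establishes: by the strong maximum principle and Lemma \ref{hopf0}, $w\geq d>0$ on $\partial\mathcal B^+(0,1)\cap\{\lambda>0\}$, and then the explicit harmonic barrier built from $\rho^{1-Q}$ (Lemma \ref{foundamental}) plus the maximum principle and Hopf's Lemma propagates this bound to the punctured ball, giving $w\geq d$ on $\overline{\mathcal B^+(0,1)}\setminus\{0\}$. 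Combined with the decay $w\to 0$ at infinity, this yields $w_\mu\geq d\geq w$ on a fixed ball around $e_\mu$ once $|\mu|$ is large, hence $W_\mu\geq 0$ there. The ``$\to+\infty$'' claim as you state it is a genuine gap; the lower-bound-via-barrier step is the missing idea. (Your observations that $W_\mu\equiv 0$ on $T_\mu$ by cylindrical symmetry and that $\widetilde W_\mu\to 0$ at infinity because $\beta_1<Q-1$ are correct and consistent with the paper.)

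For part (ii) your argument is structurally the paper's: rule out interior minima using the sign of $\mathcal L h_0$, and rule out far boundary minima via the Neumann comparison $c(z_0,\mu)\geq -\partial_\lambda h_0(z_0)/h_0(z_0)$ against the bound $c(z_0,\mu)\leq p\widetilde C^{p-1}\rho(z_0)^{-2}$ and the margin $p\widetilde C^{p-1}<\beta_1\beta_2$ from \eqref{beta}. Two remarks on the comparison with the paper. First, the paper dismisses interior minima outright by asserting $\mathcal L h_0/h_0<0$ everywhere, while you only claim this for $\rho$ large and are explicit about the correction coming from the fact that $\mathcal L$ is not $\lambda$-translation-invariant (it contains $4\lambda^2\partial_t^2$); being alert to this is a point in your favor, but you do not actually carry out the regime analysis you say is needed. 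Second, your formula $-\partial_\lambda h_0/h_0=\beta_1\beta_2(r_0^2+\beta_2^2)/\widetilde\rho(z_0)^4$ at $\lambda=0$ is the correct one (it carries the anisotropy factor $(r_0^2+\beta_2^2)/\widetilde\rho^2$), and you correctly note that the comparison with $c\lesssim\rho^{-2}$ only closes when $r_0\sim\rho(z_0)$; the complementary regime ($r_0$ bounded, $|t|$ large) is left unresolved in your write-up and deserves an explicit argument if you want this part to stand on its own.
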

\begin{proof} We follow the proof of Proposition 6.3 in \cite{Chipot}.

We first observe that by the maximum principle and Hopf's Lemma (see Lemma \ref{hopf0})
$$\min\{w(z):z\in \partial{\mathcal{B}^{+}(0,1)}\cap \{\lambda>0\}\}=d>0.$$

We define for $z\in {\mathcal{A}}_{\varepsilon}:=
\overline{\mathcal{B}^{+}(0,1)}\setminus \mathcal{B}^{+}(0,\varepsilon)$
the function
\[\phi_{\varepsilon}(z)=d\frac{\rho(z)^{1-Q}-\varepsilon^{1-Q}}{1-\varepsilon^{Q-1}}.\]
By Lemma \ref{foundamental} $\phi_{\varepsilon}$ is harmonic in $\mathcal A_\varepsilon$ and satisfies $\frac{\partial\phi_{\varepsilon}}{\partial \lambda} =0$ on $\mathcal A_\varepsilon \cap \{\lambda=0\}$.

By the maximum principle and Hopf's Lemma we have that $w(z)\geq \phi_\varepsilon(z)$ for $z\in {\mathcal{A}}_{\varepsilon}$. Since for every $z$ we have $\lim_{\varepsilon \rightarrow 0}\phi_\varepsilon(z)=d\rho(z)^{1-Q}\geq d$, we deduce that
$$w(z)\geq d \quad \mbox{on}\:\:\overline{\mathcal{B}^{+}(0,1)}\setminus \{0\}.$$
Since $w(z)\rightarrow 0$ as $\rho(z)\rightarrow \infty$, it follows that for $\mu$ large in absolute value, we have $w\leq d$ in $\mathcal B^+(z_\mu,1)$. For such $\mu$ we have clearly $W_\mu \geq 0$ on $\mathcal B^+(z_\mu,1)$, and therefore
\begin{equation}\label{W>0}
\widetilde{W}_\mu \geq 0 \quad \mbox{on}\;\;\mathcal B^+(z_\mu,1).
\end{equation}
It follows that, for $\mu<0$ large in absolute value,
$$\inf_{\Sigma_\mu}\widetilde{W}_\mu <0\quad \mbox{implies}\quad \inf_{\Sigma_\mu} \widetilde{W}_\mu = \inf_{\Sigma_\mu\setminus \mathcal B^+(z_\mu,1)}\widetilde{W}_\mu.$$
We conclude the proof of i) observing that $\widetilde{W}_\mu(z) \rightarrow 0$ as $\rho(z)\rightarrow \infty$.

 To prove (ii),  suppose that $z_{0}$ is a minimum point of $\widetilde{W}_{\mu}$ such that
$\widetilde{W}_{\mu}(z_0)<0$. We want to show that  $\rho(z_{0})$ cannot be too large.

 By the definition of $\widetilde{W}_\mu$, a direct calculation gives
      \[
\begin{cases}
-{\mathcal{L}} \widetilde{W}_{\mu}=\frac{ {\mathcal{L}}( h_{0})}{h_{0}} \widetilde{W}_{\mu}
 +2(\frac{\nabla_{\bH} h_{0}}{h_{0}}\cdot \nabla_{\bH} \widetilde{W}_{\mu}+\frac{\partial_\lambda h_0}{h_0} \partial_\lambda \widetilde{W}_{\mu}+4\lambda^2\frac{\partial_t h_0}{h_0} \partial_t \widetilde{W}_{\mu})\;\;\;  \text{in}\; \Sigma_{\mu}\setminus\{e_\mu\}, \\
   -\partial_{\lambda} \widetilde{W}_{\mu}\geq \left(c(z,\mu)+ \frac{\partial_{\lambda} h_{0}}{h_{0}} \right)\widetilde{W}_{\mu} \;\;\;\text{on}\;   \partial\Sigma_{\mu}\cap\{\lambda=0\}\setminus\{z_{\mu}\},
\end{cases}
\]
where $c(z,\mu)=p\rho^{p(Q-1)-(Q+1)}
   \Psi_{\mu}^{p-1}$ and $\Psi_{\mu}(z)$ is between $w_{\mu}(z)$ and $w(z)$.
Since
$\frac{{\mathcal{L}} (h_{0})}{h_{0}}
=\beta_{1}\big(\beta_{1}-(Q-1)\big)\rho(z+\beta_{2}e_{2n+2})^{-2}<0$, using the maximum principle we deduce
that $z_{0}$ does not belong to the interior of  $\Sigma_{\mu}$.

Assume now that $z_0\in \partial\Sigma_\mu\cap\{\lambda=0\}\setminus\{z_\mu\}$. As above we conclude that $\Psi_\mu(z_0)\leq w(z_0)\leq \frac{\widetilde C}{\rho(z_0)^{q-1}}$, and hence that $c(z_0,\mu)\leq \frac{p \widetilde C^{p-1}}{\rho(z_0)^2}$.
Since $\frac{\partial_\lambda h_0}{h_0}=-\beta_1\beta_2 \rho(z+\beta_2 e_{2n+2})^{-2}$, using assumptions \eqref{beta},  we would deduce that
$\partial_{\lambda}\widetilde{W}_{\mu}(z_0)<0$ if $\rho(z_0)$ were large enough. This is inconsistent
   with the fact $z_{0}$ is a minimum and concludes the proof of ii).
\end{proof}

\medskip

\begin{proof}[Proof of Theorem \ref{main2}]
By Proposition \ref{Prop-incia-mp}, we deduce that for $\mu$ negative and large in absolute value we have that $\widetilde{W}_\mu \geq 0$ and hence $W_\mu \geq 0$ in $\Sigma_\mu$.
Let us define
$\mu_{0}=\sup \{ \mu<0\mid
 W_{\sigma}\ge 0\;\;\mbox{on}\:\:\Sigma_\sigma \setminus e_\sigma\:\:\mbox{for all}\:\:\sigma < \mu \}$.
 We only need to prove that
$\mu_{0}=0$. Suppose that $\mu_{0}\neq 0$ by contradiction.
By continuity, $W_{\mu_0} \geq 0$ in $\Sigma_{\mu_0}$. By the maximum principle (and Hopf's Lemma), we deduce that $W_{\mu_0}\equiv 0$ in $\Sigma_{\mu_0}$ or
\begin{equation}\label{w>0}
W_{\mu_0}>0 \quad \mbox{on}\quad \Sigma_{\mu_0}\cup \left(\partial \Sigma_{\mu_0} \cap \{\lambda=0\}\cap \{t<\mu_0\}\right)\setminus\{e_{\mu_0}\}.
\end{equation}
If $W_{\mu_0} \equiv 0$, then $w$ would be even in the $t$ variable with respect to $t=\mu_0<0$ and this would contradict the Neumann condition satisfied by $w$
(we remind that $\rho(z_{\mu_0})<\rho(z)$ since we are assuming $\mu_0<0$), hence $W_{\mu_0} \equiv 0$ is impossible and therefore \eqref{w>0} holds.

By the definition of $\mu_0$
there exists $\mu_{k}\rightarrow\mu_{0}$, $\mu_{0}<\mu_{k}<0$ such that
$\inf_{{\Sigma_{\mu_{k}}}}W_{\mu_{k}}<0$.

We observe that for some positive  $b_{1}$:
  \[\min\left\{W_{\mu_0}(z)\mid\,z \in \partial \mathcal{B}^+(e_{\mu_0},|\mu_0|/2)\cap \widehat{\bH}^{n}_{+} \right\}=b_{1}.\]
   From this fact, using a similar argument to the one of point i) in Proposition \ref{Prop-incia-mp}, we deduce that
$$W_{\mu_0}\geq b_{1} \quad \mbox{in}\;\;  \overline{\mathcal{B}^+(e_{\mu_0}, |\mu_0|/2)}\setminus\{e_{\mu_0}\}.$$
Therefore, we have that
$$\lim_{k\rightarrow \infty}\inf\left\{W_{\mu_k}(z)\mid\,z\in \mathcal{B}^+(e_{\mu_k},|\mu_0|/2)\setminus \{e_{\mu_k}\}\right\}\geq b_{1}.$$
Using this bound and the fact that $W_{\mu_k}(z)\rightarrow 0$ as $\rho(z)\rightarrow \infty$, we deduce that for $k$ large enough, the negative infimum of $W_{\mu_k}$ is attained at some point $z_k \in \overline {\Sigma_{\mu_k}}\setminus \mathcal{B}^+(e_{\mu_k},|\mu_0|/2).$

By Proposition \ref{Prop-incia-mp} we know that the sequence $\{z_{k}\}$ is bounded and therefore, after passing to a subsequence, we may assume that $z_{k}\rightarrow z_{0}$. By \eqref{w>0} we have that $W_{\mu_0}(z_0)=0$ and $z_0 \in \partial \Sigma_{\mu_0}\cap\{t=\mu_0\}$.

If $z_k\in \Sigma_{\mu_k}\cap \{\lambda >0\}$ for an infinite number of $k$, then $\nabla W_{\mu_k}(z_k)=0$, and therefore, by continuity
\begin{equation}\label{grad=0}
\nabla  W_{\mu_0}(z_0)=0.
\end{equation}
If $z_0 \in \partial \Sigma_{\mu_0}\cap \widehat{\bH}^{n}_{+}$, then by  Lemma \ref{hopf0}, we have that $\frac{\partial w}{\partial t}(z_0) <0$, which gives a contradiction. Analogously, using Lemma \ref{hopf}, we get a contradiction if we assume that $z_0 \in \partial \Sigma_{\mu_0}\cap \{\lambda=0\} \cap \{t=\mu_0\}.$

In the case in which $z_k \in \partial \Sigma_{\mu_k}\cap \{\lambda=0\}\cap \{t<\mu_k\}$, we still have that the derivatives of $W_{\mu_k}$ at $z_k$ in all directions except the $\lambda$ direction vanish. Passing to the limit and arguing as above, we get a contradiction. Hence we have established that $\mu_0=0$.
This implies that $v$ is even in $t$, but since the origin $0$ on the $t$-axes is arbitrary, we can perform the CR transform with respect to any point and then we conclude that $w$ is  constant in the direction  $t$.

This shows that $U$ is actually a solution of the following problem
\begin{equation}\label{classic}
\begin{cases}
{\Delta}  U =0 &  \text{in}\; \R_+^{2n+1}, \\
 -\partial_{\lambda}U= U^{p} &  \text{on}\; \R^{2n}.
\end{cases}
\end{equation}
Since $\frac{Q+1}{Q-1}=\frac{2n+3}{2n+1} <\frac{2n+1}{2n-1}$, we conclude the proof by using the standard Liouville type theorem for problem \eqref{classic} (see \cite{Chipot,LZ}).

\end{proof}

The following lemma allows us to deduce Theorem \ref{main} by Theorem \ref{main2}.
\medskip
\begin{Lem}\label{lem-rad} Let $u\in C^{2}(\bH^{n})$ be cylindrically  symmetric and positive (respectively nonnegative). Then
the corresponding  solution $U$ of the extension problem \eqref{eq-ext} is cylindrically simmetric, i.e. $U=U(r_0,t,\lambda)$
with $r_{0}=\sqrt{x^{2}+y^{2}}$, $U$ is positive (respectively nonnegative) and moreover
$U\in C^{2}(\widehat{\bH}_{+}^{n})\cap C^{1}(\widehat{\bH}_{+}^{n})$.
\end{Lem}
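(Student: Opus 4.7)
The plan is to combine the uniqueness of the extension (Theorem \ref{thm-extension}) with the natural rotational symmetries of the operator $\mathcal{L}$, and to read off positivity and regularity from standard PDE tools for degenerate elliptic equations. Writing $\zeta = x+iy \in \mathbb{C}^n$, a function on $\CH$ is cylindrical, i.e. depends only on $r_0 = |\zeta|$ and $t$, exactly when it is invariant under the linear action of the unitary group $\mathrm{U}(n)$ on $\zeta$, since $\mathrm{U}(n)$ acts transitively on spheres of $\mathbb{C}^n$. The sub-Laplacian $\Delta_\bH$ is $\mathrm{U}(n)$-invariant: the terms $\partial_{x_j}^2 + \partial_{y_j}^2$ and $4(|x|^2+|y|^2)\partial_t^2$ obviously are, while the mixed terms $4y_j \partial_{x_j}\partial_t - 4x_j\partial_{y_j}\partial_t$ encode contraction with the standard symplectic form of $\mathbb{C}^n$, which is preserved by $\mathrm{U}(n)$. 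The additional pieces $\partial_\lambda^2$ and $4\lambda^2\partial_t^2$ in $\mathcal{L}$ do not involve $(x,y)$, hence $\mathcal{L}$ itself is $\mathrm{U}(n)$-invariant.

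Given any $R \in \mathrm{U}(n)$, set $V(x,y,t,\lambda) := U(R(x,y), t, \lambda)$. Invariance of the equation forces $\mathcal{L}V = 0$ in $\widehat{\bH}^n_+$, and cylindrical symmetry of $u$ yields $V|_{\lambda=0} = u(R(\cdot)) = u$. The uniqueness statement in Theorem \ref{thm-extension} then gives $V \equiv U$, so $U$ is $\mathrm{U}(n)$-invariant, i.e., $U = U(r_0, t, \lambda)$ as claimed.

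For (non)negativity, if $u \geq 0$ the representation of $U$ by the Poisson-type kernel produced in \cite{FGMT} (which is manifestly nonnegative) immediately gives $U \geq 0$. If in addition $u > 0$ and $u \not\equiv 0$, then $U > 0$ in $\widehat{\bH}^n_+$ by the strong maximum principle associated to Bony's principle (Proposition \ref{prop-max0}) applied on an exhausting family of bounded subdomains, combined with Hopf's Lemma \ref{hopf0} to rule out interior minima on the boundary $\{\lambda=0\}$. Finally, the regularity $U \in C^2(\widehat{\bH}^n_+) \cap C^1(\overline{\widehat{\bH}^n_+})$ follows from the standard interior hypoelliptic regularity for $\mathcal{L}$ (its principal part, together with the commutators $[X_j,Y_j] = -4T$ needed to generate $\partial_t$, satisfies Hörmander's condition) and from the boundary regularity up to $\{\lambda=0\}$ derived from the smoothness $u \in C^2(\CH)$ of the Dirichlet datum, as worked out in \cite{FGMT}.

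The main obstacle is the regularity statement, which genuinely requires the degenerate elliptic/hypoelliptic theory developed in \cite{FGMT}; by contrast, cylindrical symmetry is a one-line consequence of uniqueness plus $\mathrm{U}(n)$-invariance of $\mathcal{L}$, and the sign assertion is a direct maximum-principle/Poisson-kernel argument.
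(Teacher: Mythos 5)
Your argument is correct, but it takes a genuinely different route from the paper's. For the cylindrical symmetry, you invoke uniqueness of the extension (Theorem \ref{thm-extension}) together with the $\mathrm{U}(n)$-invariance of $\mathcal L$: since $\sum_j(y_j\partial_{x_j}-x_j\partial_{y_j})=-i\sum_j(\zeta_j\partial_{\zeta_j}-\bar\zeta_j\partial_{\bar\zeta_j})$ and $|\zeta|^2$, $\sum_j\partial_{\zeta_j}\partial_{\bar\zeta_j}$, $\partial_\lambda^2$, $\lambda^2\partial_t^2$ are all $\mathrm{U}(n)$-invariant, $V:=U\circ R$ solves the same extension problem with the same trace $u$, hence $V\equiv U$. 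The paper instead proves symmetry constructively, by recalling the Fourier-multiplier representation of $\mathcal E_{1/2}$ on the Heisenberg group (via the irreducible representations $\pi^\mu_{\zeta,t}$ and the Kummer-function multiplier $\phi_\alpha$) and reading off directly from that formula that a rotation of $\zeta$ commutes with the construction. Your approach is softer and shorter, at the small cost of needing to note that the rotated function $V$ lies in the uniqueness class of Theorem \ref{thm-extension}, which is unproblematic since $R$ is measure-preserving and fixes $t,\lambda$; the paper's approach is heavier but makes the dependence on the explicit Poisson kernel visible, which is the same object you appeal to for the sign assertion. One point worth noting: the paper's written proof of this lemma addresses only the symmetry claim, treating positivity and regularity as inherited from the explicit construction in \cite{FGMT}; your sketch of these two points (nonnegativity of the Poisson kernel, Bony's maximum principle and Hopf's Lemma for strict positivity in $\{\lambda>0\}$, interior hypoellipticity plus boundary regularity from \cite{FGMT}) is consistent with what the paper implicitly relies on, and you correctly flag regularity up to $\{\lambda=0\}$ as the step that cannot be made elementary without leaning on \cite{FGMT}.
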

\begin{proof}
The main tool in the proof of the Lemma relies on the construction of the extension $U=\mathcal E_{1/2}u$ by using the Fourier transform in $\mathbb H^n$ (we refer for details to Section 5 in \cite{FGMT} and to \cite{BG}).
Let us recall  the
 Fourier transform of a smooth function $u(\zeta,t)$, $(\zeta,t) \in \bH^{n}$,
\begin{align}
{\mathcal{F}}(u)(\mu)=\int_{\mathbb H^{n}}u(\zeta,t)\pi_{\zeta,t}^{\mu}\,d\zeta dt,
\end{align}
where $\pi_{\zeta,t}^{\mu}$ denotes the irreducible representation
\begin{align*}
\pi_{\zeta ,t}^{\mu}\Psi(\xi)=
\left\{
\begin{array}{ll}
\Psi(\xi-\bar{\zeta })e^{i\mu t+2\mu(\xi\cdot \zeta -|\zeta |^{2}/2)},&\mu>0,\\
 \Psi(\xi+\zeta )e^{i\mu t+2\mu(\xi\cdot \bar{\zeta }-|\zeta |^{2}/2)},&\mu<0,
\end{array}
\right.
 \end{align*}
for a holomorphic function  $\Psi(\xi), \xi\in\bH^{n}$  in the Bargmann space with
orthogonal basis
$$\Psi_{\alpha,\mu}(\xi)=\frac{(\sqrt{2|\mu|}\xi)^{\alpha}}{\sqrt{\alpha !}}.$$

The inversion formula is given by
\begin{align}
u(\zeta,t)=
\frac{2^{n-1}}{\pi^{n+1}}\int_{\R}\text{tr}\;\pi^{*,\mu}_{\zeta,t}
{\mathcal{F}}(u)(\mu)\,|\mu|^{n}d\mu
\end{align}
where $ \pi_{\zeta ,t}^{*,\mu}=\pi_{(\zeta ,t)^{-1}}^{\mu}$ is the adjoint operator of $\pi_{\zeta ,t}^{\mu}$.

The extension operator $\mathcal E_{1/2}$   maps a function $u$ on the Heisenberg group  to a function $U=\mathcal E_{1/2} u$ on  $\bH^n\times (0,\infty)$.
For  every $q>0$, every multi-index vector $\alpha$ and $\mu\in\R$,
 $U (\cdot,q)=\mathcal E_{1/2} u(\cdot,q)$  is  implicitly given
  through the Fourier multiplier
\begin{equation}\label{eqnq20}
\widehat{\mathcal E_{1/2} u(\cdot,q)}_\alpha(\mu)=\widehat{U}_\alpha(\mu,q) = \phi_\alpha(2|\mu| q) \ \hat u_\alpha(\mu),
\end{equation}
where $\hat{u}_\alpha(\mu)={\mathcal{F}}(u)(\mu)\Psi_{\alpha,\mu}$,
\begin{align*}
\phi_\alpha(y) = \frac{\Gamma\big( \frac{\frac{3}{2}+n+2|\alpha|}{2}\big)}{\Gamma(\gamma)}
 \ e^{-y/2} \ y^{\frac{1}{2}} \ V\Big( \frac{\frac{3}{2}+n+2|\alpha|}{2},\frac{3}{2},y\Big),
\end{align*}
and $V(a,b,y)$ is the solution of Kummer's equation
\begin{align*}
V(a,b,y) = \frac{1}{\Gamma(a)} \int_0^\infty e^{-\tau y} \tau^{a-1} (1+\tau)^{b-a-1} \,d\tau \,.
\end{align*}
Note that the function $q\mapsto \widehat{\mathcal E_{1/2}u(\cdot\:,q)_\alpha(\mu)}$ solves the equation
\begin{align*}
\big( q\partial_{qq} + \frac{1}{2}\partial_{q}- \mu^2 q -
|\mu|(n+2|\alpha|) \big) \widehat{\mathcal E_{1/2} u(\cdot,q)}_\alpha(\mu) = 0 \,,
\end{align*}
 and therefore $U=\mathcal E_{1/2}u$ satisfies
 \begin{align}\label{ext-q}
\left( q\partial_{qq} +  \frac{1}{2}\partial_{q}+ q\partial_{tt} + \frac{1}{2}
\Delta_b \right)U= 0.
\end{align}
We observe that the change of variable $q=\frac{\lambda^2}{2}$ transforms equation \eqref{ext-q} above into the extension \eqref{extension}.

For every multi-index vector $\alpha$ and $\mu\in\R$, we have
\begin{align*}
&U_{\alpha}(\tilde{\zeta},\tilde{t},q)=
\frac{2^{n-1}}{\pi^{n+1}}\int_{\R}\Big[\text{tr}\;\pi^{*,\mu}_{\tilde{\zeta},\tilde{t}}
 \phi_\alpha(2|\mu| q) \ \hat u_\alpha(\mu)\,|\mu|^{n}\Big]d\mu\\
&\hspace{1em} =\frac{2^{n-1}}{\pi^{n+1}}\int_{\R}\Big[\text{tr}\;\pi^{*\mu}_{\tilde{\zeta},\tilde{t}}
 \phi_\alpha(2|\mu| q) \ \big(\int_{\mathbb H^{n}}u(\zeta,t)\pi_{\zeta,t}^{\mu}\,d\zeta dt \Psi_{\alpha,\mu} \big)
 \,|\mu|^{n}\Big]d\mu\\
 &\hspace{1em} =\frac{2^{n-1}}{\pi^{n+1}}\int_{\R}\int_{\mathbb H^{n}}\Big[ \text{tr}\;
 \phi_\alpha(2|\mu| q) \ \big(u(\zeta,t)\pi^{\mu}_{(\tilde{\zeta},\tilde{t})^{-1}} \pi_{\zeta,t}^{\mu} \Psi_{\alpha,\mu} \big)
 \,|\mu|^{n}\Big] \,d\zeta dtd\mu.
\end{align*}
We observe that
if $u$ is cylindrically symmetric with
respect to $\zeta$, that is, for the rotation ${\mathcal{R}}_{x,y}$,
\[u(\zeta,t)=u({\mathcal{R}}_{x,y}(\zeta),t),\]
then the extension $U_{\alpha}(\zeta,t,q)$ of  $\widehat{\mathcal E_{1/2} u(\cdot,q)}_\alpha$
 is cylindrically symmetric with
respect to $\zeta=(x,y)$ for each $q$.
 That is,  \[U(\zeta,t,q)=U({\mathcal{R}}_{x,y}(\zeta),t,q).\]
 After the change of variables $q=\lambda^2/2$, we deduce that
$U(\zeta,t,\lambda)$ is  cylindrically symmetric with
respect to $\zeta=(x,y)$ for each $\lambda$.

\end{proof}

\bigskip

\begin{proof}[Proof of Theorem \ref{main}] By Lemma \ref{lem-rad},
we see that if $u$ is a cylindrical function, that is $u=u(|(x,y)|,t)$, then its extension $U$ satisfying \eqref{extension} is also cylindrical in the all halfspace $\mathbb H^n\times \R^+$, in the sense that $U=U(|(x,y)|,t,\lambda)$.
Using this fact, the conclusion follows as a corollary of Theorem \ref{main2}.

\end{proof}

\bigskip
\noindent {\bf Acknowledgements:}  Both authors were supported  by
 Spain Government grant MTM2011-27739-C04-01;
 J.T. was supported by Chile  Government grant Fondecyt 1120105, USM 121402;
  CMM in Universidad de Chile.

%%%%%%%%%%%%%%%%%%%%%%%%%%%%%%%%%%%%%%%%%%%%%%%%%%%%%%%%%%%%%%%%%%%%%%%%%%%%%%%%%%%%%%%


\begin{thebibliography}{99}


 \bibitem{Alex56}  Alexandrov,  A.D.; {\em  Uniqueness theorems for surfaces in the large I,} Vestnik Leningrad Univ.
Math. 11 (1956), 5-17.

\bibitem{BG}  Bahouri H., I. Gallagher, {\em  The heat kernel and frequency localized functions on the
Heisenberg group},  In: A. Bove et al. (eds.), Advances in phase space analysis of partial
differential equations, Birkhauser, Boston (2009), 17--35.

   \bibitem{BN91}  Berestycki H.;  Nirenberg L.;
   {\it On the method of moving planes and the sliding method}, Bol. Soc.
Brasil. Mat.  {\bf22} (1991),
 1-37.

\bibitem{BP} Birindelli, I.; Prajapat, J.; {\em Nonlinear Liouville theorems in the Heisenberg group via the moving plane method}, Comm. Part. Diff.  Equa. {\bf 24} (1999),   1875-1890.

\bibitem{BCCD} Birindelli, I.; Capuzzo Dolcetta, I.; Cutri, G.; {\em A Liouville theorems for semilinear equations on the Heisenberg group}, Ann. Inst. H. Poin. Anal. Non Lin. {\bf 14} (1997), no. 3, 295-308.

\bibitem{BLU} Bonfiglioli, A; Lanconelli, E, Uguzzoni, F.; {\em Stratified Lie groups and potential theory for their sub-Laplacians}, Springer Monographs in Mathematics. Springer, Berlin, 2007

    \bibitem{Bony}  Bony, J.M.; Principe du maximum, inegalite de Harnack et unicite du probleme de Cauchy pour
 les operateurs elliptiques degeneres.   Ann. Inst. Fourier (Grenoble)  19  (1969) 277-304.

\bibitem{BFM}  Branson, T. P.; Fontana, L.; Morpurgo, C., {\em Moser-Trudinger and Beckner-Onofri's inequalities on the CR sphere}, Ann. of Math. (2) 177 (2013), no. 1, 1--52

\bibitem{CS} Caffarelli, L.; Silvestre, L.; \emph{An extension problem related to the fractional
 Laplacian}, Comm. in Part. Diff. Equa. {\bf 2} (2007), 1245-1260.

\bibitem{CL} Chen, W.; Li, C.; {\em Classification of solutions of some nonlinear elliptic equations}, Duke Math. J. {\bf 63} (1991), 615--622.

\bibitem{Chipot} Chipot, M.; Chleb\'ik, M.; Fila, M.; Shafrir, I.; {\em Existence of positive solutions of a semilinear
elliptic equation in $\R^n_+$ with a nonlinear boundary condition}, J. Math. Anal. Appl. {\bf 223}
(1998), 429--471.

\bibitem{Epstein91}  Epstein, C.;  Melrose, R.;  Mendoza, G.;   \emph{Resolvent of the Laplacian on pseudoconvex
domains}, Acta Math. {\ bf 167} (1991), 1-106.


\bibitem{Fo75}  Folland, G.B.; {\em
Subelliptic estimates and function spaces on nilpotent Lie groups},
Ark. Mat.  13  (1975),   161-207.

\bibitem{FS}  Folland, G. B.; Stein, E. M.{\em Estimates for the ?¯b complex and analysis on the Heisenberg group}, Comm. Pure Appl. Math. 27 (1974), 429--522.

\bibitem{FF} Ferrari, F; Franchi, B; {\em Harnack inequality for fractional laplacians in Carnot groups}, Mathematische Zeitschrift 2014.


\bibitem{FGMT} Frank, R.; Gonzalez, M; Monticelli, D.; Tan, J.;
{\em Conformal fractional Laplacians on the Heisenberg group}, preprint.

\bibitem{GL}  Garofalo, N; Lanconelli, E; {\em Existence and nonexistence results for semilinear equations on the Heisenberg group},
 Indiana Univ. Math. J. {\bf 41} (1992), 71-98.

\bibitem{GNN}  Gidas, B.;  Ni, W.-M.; Nirenberg, L.; {\em
  Symmetry and related properties via the maximum principle},
   Comm. Math. Phys.  {\bf68} (1979),  209-243.

\bibitem{GS} Gidas, B.; Spruck, J.; {\em Global and local behavior of positive solutions of nonlinear elliptic equations}, Comm. Pure App. Math. {\bf 35} (1982), 528--598.




     \bibitem{Gover-Graham:CR-powers}
Gover, A.;  Graham, C.R.; {\em CR invariant powers of the sub-Laplacian},
 J. Reine Angew. Math. {\bf 583} (2005),1--2,.

\bibitem{Guillarmou08}  Guillarmou, C.; Sa Barreto, A.;
 \emph{Scattering and inverse scattering on ACH manifolds},
  J. Reine Angew. Math.  {\bf 622} (2008) 1--55.

\bibitem{HislopPeterTang}
 Hislop, P.;  Perry, P.; Tang, S.; \emph{CR-invariants and the scattering operator for complex manifolds with boundary},
 Anal. PDE  {\bf1} (2008)  197--227.

\bibitem{LU} Lanconelli, E.; Uguzzoni, F.; {\em Asymptotic behavior and non-existence theorems for semilinear Dirichlet problems involving critical exponent on unbounded domains of the Heisenberg group}, Boll. Unione Mat. Ital. Sez. B Artic. Ric. Mat.  {\bf(8) 1} (1998), no. 1, 139-168.

\bibitem{LZ} Li, Y.Y.; Zhang, L.; { \em Liouville-type theorems and Harnack-type inequalities for
semilinear elliptic equations}, J. Anal. Math. {\bf 90} (2003), 27-87.

\bibitem{LZhu} Li, Y.Y.; Zhu, M.; {\em Uniqueness theorems through the method of moving spheres}, Duke Math. J. {\bf 80} (1995), 383-417.

\bibitem{JL} Jerison, D.S.; Lee, J.M.; {\em The Yamabe problem on CR manifolds}, J. Diff. Geo.  {\bf 25}, (1987), 167--197.


\bibitem{Serrin}  Serrin, J.;  {\em  A symmetry problem in potential theory}, Arch.  Ration.  Mech. Anal. {\bf43} (1971), 304-318.

 \bibitem{Stein93}    Stein, E.M.;  {\em  Harmonic Analysis:
 Real-Variable Methods,Orthogonality, and Oscillatory Integrals,}
  Princeton University Press,  (1993).

\bibitem{U}  Uguzzoni, F.; {\em A non-existence theorem for a semilinear Dirichlet problem involving critical exponent on halfspaces of the Heisenberg group}, NoDEA Nonlinear Diff. Equa. Appl. {\bf 6}  (1999),  191-206.

\bibitem{LX} Xu, L; {\em Semi-linear Liouville theorems in the Heisenberg group via vector field methods}, J. Differential Equations {\bf 247} (2009), 2799-2820.
\end{thebibliography}
\end{document}